\newtheorem{remark}[theorem]{Remark}
\title{A Variational Approach for Continuous Supply Chain Networks\thanks{This work is partially supported by the NSF through grant EFRI-1024707, ``A theory of complex transportation network design".}}
\author{Ke Han \thanks{Department of Civil and Environmental Engineering, Imperial College London, London SW7 2BU, United Kingdom. \href{mailto:k.han@imperial.ac.uk}{\tt k.han@imperial.ac.uk} }
\and  Terry L. Friesz\thanks{Department of Industrial and Manufacturing Engineering, Pennsylvania State University, University Park 16802, USA. \href{mailto:tfriesz@psu.edu}{\tt tfriesz@psu.edu}}
\and Tao Yao\thanks{Department of Industrial and Manufacturing Engineering, Pennsylvania State University, University Park 16802, USA. \href{mailto:tyy1@engr.psu.edu}{\tt tyy1@engr.psu.edu}}}
\begin{document}
 \begin{center}
\textcolor{blue}{ARTICLE LINK:  http://epubs.siam.org/doi/abs/10.1137/120868943
\\  PLEASE CITE THIS ARTICLE AS\\ 
Han, K., Friesz, T.L., Yao, T., 2014. A variational approach for continuous supply chain networks. SIAM Journal on Control and Optimization 52 (1), 663-686.}
 \line(1,0){370}
 \end{center}

\maketitle

\begin{abstract}
We consider a continuous supply chain network consisting of  buffering queues and processors first articulated by \cite{Armbruster2} and analyzed subsequently by \cite{Armbruster1} and \cite{Armbruster4}. A model was proposed for such network by \cite{GHK1} using a system of coupling partial differential equations and ordinary differential equations. In this article, we propose an alternative approach based on a variational method to formulate the network dynamics. We also derive, based on the variational method, an algorithm that guarantees numerical stability, allows for rigorous error estimates, and facilitates efficient computations. A class of network flow optimization problems are formulated as mixed integer programs (MIP). The proposed numerical algorithm and the corresponding MIP are compared theoretically and numerically with existing ones  \cite{FGHKM, GHK1}, which demonstrates the modeling and computational advantages of the variational approach.
\end{abstract}

\begin{keywords}
continuous supply chain, partial differential equations, variational method, mixed integer program
\end{keywords}

\begin{AMS}
35C05, 49J20, 90C11
\end{AMS}

\pagestyle{myheadings}
\thispagestyle{plain}
\markboth{K. HAN, T. L. FRIESZ AND  T. YAO}{A VARIATIONAL APPROACH FOR CONTINUOUS  SUPPLY CHAIN NETWORK}

\section{Introduction}
\subsection{Modeling overview}
Manufacturing systems can be described by a number of mathematical models, which provide powerful tools to study and analyze the behavior of such systems under specific conditions. Among theses mathematical representations, we distinguish between static/stationary models and dynamic models; the latter has an inherent dependence on time and falls within the scope of this paper.

The dynamic models describe and predict time evolution of system states by introducing dynamics to different production steps. These models can be further categorized as {\it discrete event simulation} (DES) \cite{BCN}, and continuum models \cite{Armbruster2}. The DES  is an exact and computationally intensive method, in which the evolution of the system is viewed as a sequence of significant changes in time, called {\it events}, for each part (product) separately.  A cost-effective alternative to the discrete event models is fluid-based continuum network models represented by partial differential equations (PDEs). The continuum models seek to describe the system dynamics from an aggregate level and ignore local granularities. There has been a significant number of publications on the PDE formulation of traffic dynamics, for example, in \cite{CTM1, CTM2, GP, LW, Newell, Richards}. The continuum modeling technique was not applied to supply chain networks until recently, by the seminal work of \cite{Armbruster1, Armbruster2, Armbruster3, Armbruster4, CKW, DM, Kotsialos} and \cite{LaMarca}.

In particular, \cite{Armbruster2} derived, based on simple rules for releasing parts, a conservation law model for the density and flux (flow) of the parts in the production process: 
\begin{equation}\label{introclaw}
\partial_t\rho(t,\,x)+\partial_x\min\big\{V(x)\rho(t,\,x),\,\mu(x)\big\}~=~0\qquad t\in[0,\,T],\quad x\in[a,\,b]
\end{equation}
where the processor is expressed as a spatial interval $[a,\,b]$, and $\rho(t,\,x)$ denotes the spatial-temporal distribution of the product density. In other words, $\rho(t,\,x)$ measures the local product density (in number of products per unit distance) at time $t$ and location $x$.  $V(x)$ and $\mu(x)$ denote location-dependent processing speed and flow capacity, respectively.   The above PDE will be asymptotically valid in regimes where a substantial number of parts are present on the processor. It should be noted that the solution to the conservation law (\ref{introclaw}) can only be considered in the distributional sense, due to the discontinuous dependence of the flux function on $x$. This is easily seen from an example involving bottleneck: consider the flow capacity $\mu(a)$ at a point $a$, and assume $\mu(a^-)>\mu(a^+)$. If the flow $\mu(a^-)$ is saturated, a Dirac $\delta$-distribution will emerge in the density profile $\rho(t,\,\cdot)$ at location $x=a$, which corresponds to an active bottleneck. When this happens, an integral solution of \eqref{introclaw} does not exist. 

To overcome such theoretical difficulty, \cite{GHK1} proposed, in addition to the PDE formulation \eqref{introclaw}, a separate ordinary differential equation for the buffering queues immediately upstream of each processor, thus avoiding direct encounter of the $\delta$-distribution. This finally leads to a system of  conservation laws coupled with ordinary differential equations.  This supply chain model can be extended to incorporate general network topology \cite{HKP},  certain real-world production features such as multi-commodity or due-date production \cite{CSC}, and a class of network control and optimization problems \cite{GHK2}.

\subsection{Numerical techniques}

In  \cite{CSC, FGHKM, GHK1} and \cite{GHK2}, the numerical methods for solving the aforementioned coupling system of PDEs and ODEs is solved with an upwind finite difference scheme for the conservation laws and a forward Euler scheme for the ordinary differential equations. To guarantee stability of the explicit discretization scheme, the Courant-Friedrichs-Lewy condition \cite{LeVeque} must hold for the PDE:
\begin{equation}\label{CFL}
\Delta t~\leq~\min_{i} \Big\{{\Delta x_i\over V^i}\Big\} 
\end{equation}
where $\Delta t$ denotes the time step, index $i$ runs through every processor (link) of the supply chain network, $\Delta x_i$ and $V^i$ denote respectively the spatial step and the maximum processing speed on processor $i$. The solution method described above  encounters several numerical difficulties. First, the ODE representing the buffer queue has a discontinuous dependence on its unknown variable, this will be explained in more details later in Section \ref{CSCdef}. Certain modifications were proposed in the literature to remedy such numerical deficiency. For example, \cite{Armbruster4} proposed a smoothing parameter to revise the dynamics at queues. However, such modification could suffer from non-physical solutions to be illustrated in Section \ref{CSCcasestudy}. Second, the CFL condition for the PDE and the stiffness condition for the modified ODE from \cite{Armbruster4} imply a trade-off between numerical accuracy and computational efficiency, which could potentially increase the computational burden of the model.

In view of the above limitations, we propose in this article a reformulation of the same physical system  using cumulative production curves \cite{Wiendahl} and Hamilton-Jacobi equations. As we shall demonstrate subsequently, this alternative not only eliminates the abovementioned numerical issues, but also leads to an efficient `grid-free' algorithm and closed-form solution representation. Here `grid-free' means the PDE is solved without spatial discretization and without intermediate calculation of densities inside the processor.  The solution procedure of the Hamilton-Jacobi equation is a  variational method called the Lax  formula  \cite{BH, VT, Evans, LF, Lax}. The formula was originally proposed as a semi-analytic solution representation of the scalar conservation law and the Hamilton-Jacobi equation \cite{Evans, Lax}; its applications to fluid-based traffic modeling are recently investigated in \cite{BH, BH1, CC1, CC2, VT}. Using the variational approach, the viscosity solution of the Hamilton-Jacobi equation is formulated as an optimization problem which, depending on the specific form of the Hamiltonian, may be simplified or explicitly instantiated. The variational approach for the continuous supply chains is a powerful analytical and computational tool; and its advantages compared to the finite-difference schemes, to be fully established in the rest of this paper, are listed as follows.

\begin{enumerate}
\item The dynamics of the buffer queue and the processor can be simultaneously treated using a single Lax formula, thus avoiding separate modeling of these two components. 

\item The Lax formula yields a much lower numerical error in the solution than the finite-difference schemes.

\item The proposed algorithm is grid-free; in other words, there is no need to discretize the spatial domain of the PDE.

\item The proposed algorithm does not impose any constraints on the time step or uniformity of the time grid. Thus one has more flexibility in choosing the time grid for computational convenience. 

\item Our reformulation of the supply chain networks is free of the spatial variables originally appearing in the PDEs. This is arguably more general than the PDE-based system in terms of modeling assumptions and solution methods.
\end{enumerate}

This paper also presents a mixed integer program (MIP) formulation of the supply chain network optimization problem using the variational reformulation.  The mixed integer program is one of  the main formulations of supply chain optimization in the economic literature, see \cite{PW, VW}. Its connection with continuum network models brings new insights to the management of such networks; MIPs have been investigated quite extensively in the context of traffic flows and supply chains \cite{FHKM, FGHKM, MIPsignal, Z}. This article reveals a new relationship between MIPs and fluid-based continuum models from the point of view of variational principle.

It is natural to compare our proposed MIP with existing ones such as those proposed by \cite{FHKM, FGHKM}. The latter are based on a finite-difference discretization of the PDEs and ODEs. For the same reason mentioned before, the MIP based on the variational approach will allow more efficient computation and yields better solution quality. To reach the same level of numerical precision, the MIP we put forward requires much less (binary) variables than those based on finite-difference schemes; this will be established both theoretically and numerically  in this paper.

 The rest of the paper is organized as follows. Section \ref{CSCdef} briefly reviews the supply chain network model originally proposed by \cite{GHK1}, which consists of conservation laws and ODEs. In Section \ref{CSCvariational}, a variational approach is proposed to reformulate the system. The solutions are derived in closed form in both continuous and discrete time.  Section \ref{CSCnetwork} considers a network flow optimization problem based on the proposed variational approach and derives a mixed integer program. Such MIP is then compared to the MIP considered in \cite{FGHKM}. Finally, several numerical experiments are  presented in Section \ref{CSCnumerical} to illustrate the advantages of applying the variational approach.

\section{Supply chain network model}\label{CSCdef}
We begin with the articulation of the network model proposed by \cite{GHK1}. The supply chain model consists of separate modeling of the buffer queues (using ODEs) and processors (using PDEs). A precise description of the network is made via the notion of directed graph $G(\mathcal{A},\,\mathcal{V})$ with the set of edges (or arcs) $\mathcal{A}$ and the set of vertices (or nodes) $\mathcal{V}$.

\begin{definition}\label{networkdef} {\bf (Continuous supply chain network)}

1. A continuous supply chain network is represented as a directed graph $G(\mathcal{A},\,\mathcal{V})$ where each edge $e\in\mathcal{A}$ corresponds to an individual processor, each vertex $v\in\mathcal{V}$ represents the respective queue upstream of the processor. 

2.  Each processor $e\in\mathcal{A}$ is expressed as a spatial interval $[a^e,\,b^e]$, with $L^e=b^e-a^e$ being the length of the processor \footnote{Note that the spatial representation of the processor is somewhat imaginary and arbitrary. In an actual manufacturing process, the key quantity of interest is the processing time (or throughput time) of a part, which is assumed to be a constant in this paper. Having a virtual spatial dimension introduced to the dynamics enables us to invoke the PDE formulation.}. 

3.  Each processor possesses a queue, which is located at the vertex at the upstream end of the processor.

4 . The flow capacity $\mu^e$,  processing speed $V^e$ and throughput time $T^e=L^e/V^e$ of each processor $e\in\mathcal{A}$ are constants.

\end{definition}

For each $e\in\mathcal{A}$, let $\rho^e(t,\,x)$  denote the density of products at time $t\in[0,\,T]$ and location $x\in[a^e,\,b^e]$; let $q^e(t)$ denote the size of the queue upstream of this processor. Assume that products are fed to the buffer queue at the rate $\overline u^e(t)$, before they are released to the processor. The dynamics of the processor and the queue are governed by the following advection equation (\ref{eqn1}) and ODE (\ref{eqn2})-(\ref{feed}).
\begin{equation}\label{eqn1}
\partial_t\rho^e(t,\,x)+V^e\,\partial_x \rho^e(t,\,x)~=~0,\quad  \rho^e(0,\,x)~=~\rho^e_0(x),\quad (t,\,x)\in[0,\,T]\times[a^e,\,b^e]
\end{equation}
\begin{equation}\label{eqn2}
{d\over dt} q^e(t)~=~\overline u^e(t)-f^e\big(\rho^e(t,\,a^e)\big),\qquad q^e(0)~=~q^e_0
\end{equation}
\begin{equation}\label{feed}
f^e\big(\rho^e(t,\,a^e)\big)~=~\begin{cases}\min\{\overline u^e(t),\,\mu^e\}\qquad &q^e(t)~=~0\\
\mu^e\qquad &q^e(t)~>~0\end{cases}
\end{equation}
Equation (\ref{feed}) represents the rate at which products in the queue are released to the processor, i.e., the service rate. Notice that the PDE  (\ref{eqn1}) with the flow capacity constraint can be re-written as a scalar conservation law:
\begin{equation}\label{claw}
\partial_t\rho^e(t,\,x)+\partial_x\phi^e\big(\rho^e(t,\,x)\big)~=~0
\end{equation}
where the flux function satisfies 
\begin{equation}\label{phidef}
\phi^e(\rho)~=~\min\big\{V^e\rho,\,\mu^e\big\}
\end{equation}

A straightforward numerical scheme for solving system (\ref{eqn1})-(\ref{feed}) is to   apply space-time discretization and use the finite difference approximation, see \cite{Armbruster2, CSC, GHK1, GHK2}. For example, one could use an upwind scheme for conservation law (\ref{eqn1}) and forward Euler method for ODE (\ref{eqn2}). Moreover, to ensure numerical stability, one needs to impose the CFL-type constraints on the time step, see (\ref{CFL}).

One of the goals of this article is to provide an effective alternative to the numerical scheme mentioned above, namely, the variational method (Lax formula) \cite{BH, VT, Evans}. The Lax formula was proposed in \cite{Lax} for the study of scalar conservation laws of the form
\begin{equation}\label{quasi}
\partial_t u+ \partial_x f(u)~=~0
\end{equation}
of which the PDE (\ref{eqn1})  is just  a special case. The Lax formula expresses the weak entropy solution of (\ref{quasi}) as the solution of a minimization problem which, in our case,  can be expressed explicitly. Notice that  (\ref{quasi}) can be equivalently written as the following Hamilton-Jacobi equation
\begin{equation}\label{HJgeneral}
\partial_t U+ f\big(\partial_x U\big)~=~0,\qquad \hbox{where}\quad U(t,\,x)~=~\int_0^t u(s,\,x)\,ds
\end{equation}
We also wish to consider the following initial condition 
\begin{equation}\label{HJinitial}
U(0,\,x)~=~U_0(x)
\end{equation}
The classical Lax formula for the Cauchy problem (initial value problem) \eqref{HJgeneral}-\eqref{HJinitial} is stated below, the reader is referred to \cite{Evans} for a detailed derivation.
\begin{lemma}\label{laxlemma}{\bf (Lax formula)}
Assume that $f(\cdot)$ is convex, and that $U_0(\cdot)$ is Lipschitz continuous. Then the viscosity solution to the Cauchy problem \eqref{HJgeneral}-\eqref{HJinitial} is 
\begin{equation}\label{Laxoriginal}
U(t,\,x)~=~\inf_{y\in\mathbb{R}}\left\{t f^*\left({x-y\over t}\right)+U_0(y)\right\} \qquad t~>~0
\end{equation}
where $f^*(\cdot)$ is the Legendre transform of $f(\cdot)$: $f^*(v)=sup_u\{vu -f(u)\}$.
\end{lemma}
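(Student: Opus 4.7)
The plan is to follow the classical route via characteristics, the dynamic programming (semigroup) principle, and uniqueness of viscosity solutions. First I would motivate the formula by the method of characteristics for smooth solutions. Treating the HJ equation $\partial_t U + f(\partial_x U) = 0$ as a first-order PDE, along a characteristic issuing from $(0,y)$ with initial slope $p = U_0'(y)$, the slope $p$ is preserved, the characteristic is the straight line $x(t) = y + t f'(p)$, and the value of $U$ evolves as $U(t,x) = U_0(y) + t\bigl[p f'(p) - f(p)\bigr]$. Setting $v = (x-y)/t = f'(p)$ and recognizing $p\,f'(p) - f(p) = f^*(v)$ at the maximizer, one obtains the candidate $U_0(y) + t f^*\bigl((x-y)/t\bigr)$. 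Taking the infimum over $y$ is the natural device for selecting the viscosity branch when characteristics would otherwise cross.

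Next I would verify the analytic preliminaries. Because $U_0$ is Lipschitz and $f^*$ is convex and superlinear (as Legendre dual of a convex $f$), the infimum in \eqref{Laxoriginal} is attained, and standard Hopf–Lax estimates give that the resulting function $U(t,x)$ is Lipschitz on $[\tau,T]\times\mathbb{R}$ for every $\tau>0$. The initial condition $U(t,x)\to U_0(x)$ as $t\to 0^+$ is obtained by bounding the minimizer $y_*$ with $|y_*-x|\leq Ct$ using the Lipschitz constant of $U_0$ and the superlinearity of $f^*$, then passing to the limit. These steps are routine once convexity and Lipschitz continuity are in hand.

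The core step is to show $U$ is a viscosity solution. I would first establish the dynamic programming identity
\begin{equation*}
U(t+s,x) \;=\; \inf_{z\in\mathbb{R}}\Bigl\{ s\, f^*\!\Bigl(\tfrac{x-z}{s}\Bigr) + U(t,z)\Bigr\},
\qquad s,t>0,
\end{equation*}
which follows by splitting any candidate $y$ into a midpoint $z$ lying on the straight line from $y$ to $x$ and using $f^*$-convexity (one inequality) together with the attainment of the infimum (the reverse). From this semigroup property, the viscosity subsolution and supersolution inequalities are derived in the standard way by plugging in a smooth test function $\varphi$ touching $U$ from above or below at an interior point, choosing the competitor $z = x - s\,\varphi_x(t,x)/\text{(something)}$ or keeping $z=x$, dividing by $s$, and letting $s\to 0^+$; the Legendre duality $f(p) = \sup_v\{pv - f^*(v)\}$ converts the resulting limits into $\varphi_t + f(\varphi_x) \le 0$ and $\ge 0$, respectively. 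Uniqueness of viscosity solutions to the Cauchy problem with Lipschitz initial data (Crandall–Lions) then identifies $U$ with the viscosity solution.

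The main obstacle I anticipate is the viscosity-solution verification: handling the infimum carefully when combined with test functions, particularly when the minimizer is not unique. The semigroup property is the pivotal technical ingredient that makes this tractable, so I would spend most of the effort establishing it cleanly; once it is in place, the two viscosity inequalities follow by short and symmetric arguments.
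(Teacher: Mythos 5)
The paper does not prove this lemma itself---it defers to the Evans reference---and your outline is precisely the standard Hopf--Lax argument from that source (characteristics as motivation, Lipschitz estimates and attainment of the infimum, the semigroup/dynamic-programming identity, viscosity sub- and supersolution verification via Legendre duality, and Crandall--Lions uniqueness), so it is correct and takes essentially the same route. One small caution: superlinearity of $f^*$ follows from $f$ being finite on all of $\mathbb{R}$, not from convexity alone, and in the paper's actual application $g$ is defined only on $[0,\,\mu]$, so $g^*$ is \emph{not} superlinear there; attainment of the infimum in that setting is instead secured by the monotonicity and left-continuity of $Q(\cdot)$.
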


\section{Variational method}\label{CSCvariational}
In this section, we  apply the variational formulation to the system governed by  (\ref{eqn1})-(\ref{feed}), we will demonstrate the capability of the Lax formula to simultaneously handle dynamics of the  queue and processor, in a way consistent with (\ref{eqn1})-(\ref{feed}), and to reduce the complexity of the coupling PDEs and ODEs introduced in \cite{GHK1}. We first consider a single processor. For simplicity of notation, the superscript $e$  will be dropped for now. The following argument will be extended to a general supply chain network in Section \ref{CSCnetworkextension}. 

Denote the flux of products by $u(t,\,x)$, i.e., $u(t,\,x)\doteq \phi(\rho)$, where $\phi(\cdot)$ is defined in \eqref{phidef}. Recall the density-based scalar conservation law (\ref{claw}), which is rewritten as a PDE whose unknown variable is the flux:
\begin{equation}\label{clawu}
\partial_x u(t,\,x)+\partial_t g\big(u(t,\,x)\big)~=~0
\end{equation} 
where the function $u\mapsto g(u)=\rho$ is defined as the inverse of the function $\rho\mapsto\phi(\rho)= u$ on the interval $[0,\,\mu/V]$. To be precise, we have 
\begin{equation}\label{gdef}
g(u)~=~u/V\qquad u\in[0,\,\mu]
\end{equation}

\begin{remark}
Notice that such inversion of the unknown in the PDE is made possible only when the flux function $\phi(\cdot)$ is invertible. In addition, to show the equivalence between \eqref{clawu} and \eqref{claw}, one needs to invoke the definition of weak entropy solution (\cite{Bbook}). The reader is referred to  \cite{BH} for more detail on the inversion technique and \cite{FHNMY} for a formal proof of equivalence.
\end{remark}

Let us fix a time horizon $[0,\,T]$ for any given $T>0$. The processor of interest is represented by a spatial interval $[a,\,b]$, where $b-a=L$. We introduce the function $U(\cdot,\,\cdot): [0,\,T]\times[a,\,b]\mapsto \mathbb{R}_+$, defined as
$$
U(t,\,x)~\doteq~\int_{0}^t u(s,\,x)\,ds \qquad (t,\,x)\in[0,\,T]\times[a,\,b]
$$

The  conservation law (\ref{clawu}) can be equivalently written as a Hamilton-Jacobi equation with unknown $U(t,\,x)$: 
\begin{equation}\label{HJ}
\partial_x U(t,\,x)+ g\big(\partial_t U(t,\,x)\big)~=~0
\end{equation}

Define function $Q(\cdot): [0,\,T]\mapsto \mathbb{R}_+$ which measures the cumulative number of products that have arrived at queue by time $t$.  For what follows, $Q(\cdot)$ is naturally assumed to be non-decreasing and continuous except for countably many $t$. For reason that will become clearly later in Section \ref{secexplicitformula}, we extend $Q(\cdot)$ to the time interval $(-\infty,\,0)$, with value zero assigned. For the rest of this article, we consider the left-continuous version of $Q(\cdot)$, so that $Q(t)=Q(t-)$, $t\in\mathbb{R}$.  Now consider the Lipschitz continuous function:
\begin{equation}\label{sdef}
\overline U(t)~\doteq~\inf_{\tau\leq t}\big\{Q(\tau)+\mu(t-\tau)\big\}~\leq~Q(t)
\end{equation}
Clearly, $\overline U(t)$ measures the total number of products that have been released from the buffering queue to the processor by time $t$. Notice that $Q(t)-\overline U(t)$ measures the size of the queue upstream of the processor. We consider the following ``initial value" problem 
\begin{equation}\label{maineqn}
\begin{cases}
\partial_x U(t,\,x)+g\big(\partial_t U(t,\,x)\big)~=~0\\
U(t,\,0)~=~\overline U(t)
\end{cases}
\end{equation}
For $t>0$, the  viscosity solution to (\ref{maineqn}) is provided by the following variation of the Lax formula.
\begin{proposition}\label{Lax}
The solution to (\ref{maineqn}) is given by the following identities.
\begin{equation}\label{Laxformula}
U(t,\,x)~=~\inf_{\tau\in\mathbb{R}}\Big\{x\,g^*\Big({t-\tau\over x}\Big)+\overline U(\tau)\Big\}~=~\inf_{\tau\in\mathbb{R}}\Big\{x\,g^*\Big({t-\tau\over x}\Big)+Q(\tau)\Big\}
\end{equation}
where $g^*$ is the Legendre transform of $g$
\begin{equation}\label{legendre}
g^*(q)~=~\sup_{p\in[0,\,\mu]}\big\{q\,p-g(p)\big\}~=~
\begin{cases}
0\qquad  & q~\leq~{1\over V}
\\
\left(q-{1\over V}\right)\mu\qquad & q~>~{1\over V}
\end{cases}
\end{equation}
where $\mu$ denotes the flow capacity of the processor, $V$ denotes the processing speed.
\end{proposition}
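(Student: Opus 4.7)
The plan is to reduce Proposition \ref{Lax} to the classical Lax formula (Lemma \ref{laxlemma}) by swapping the roles of the time and space variables, then to verify the explicit form of the Legendre transform and the equivalence of the two infima in (\ref{Laxformula}).

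First, I observe that the Hamilton-Jacobi equation in (\ref{maineqn}) has the structure $\partial_x U + g(\partial_t U) = 0$, which, after renaming the variables $(x,t)\leftrightarrow(s,y)$, matches the standard form $\partial_s U + g(\partial_y U) = 0$ of a Cauchy problem with ``initial'' time $s=0$ corresponding to the left boundary $x=0$ of the processor. Under this renaming, the boundary datum $U(t,0)=\overline U(t)$ becomes the initial datum $U(0,y)=\overline U(y)$, and I check the two hypotheses of Lemma \ref{laxlemma}: the function $g(p)=p/V$ on $[0,\mu]$, extended by $+\infty$ outside $[0,\mu]$, is convex and lower semicontinuous; and $\overline U$ is Lipschitz continuous with constant $\mu$ by construction in (\ref{sdef}). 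Applying Lemma \ref{laxlemma} then delivers the first equality in (\ref{Laxformula}).

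Next, I compute $g^*$ directly from (\ref{legendre}). Since $p\mapsto p(q-1/V)$ is affine, its supremum over $[0,\mu]$ is attained at $p=0$ when $q\leq 1/V$ (giving value $0$) and at $p=\mu$ when $q>1/V$ (giving $\mu(q-1/V)$), which is exactly the claimed formula. In particular $g^*$ is nondecreasing and globally Lipschitz with constant $\mu$; this Lipschitz property is the tool I use for the third step.

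For the equivalence of the two infima, one direction is immediate from $\overline U(\tau)\leq Q(\tau)$, obtained by taking $\sigma=\tau$ in (\ref{sdef}). For the reverse direction, I fix $\tau\in\mathbb R$ and $\sigma\leq\tau$ and use the Lipschitz bound on $g^*$ together with $(t-\sigma)/x\geq(t-\tau)/x$ to write
\begin{equation*}
x\Bigl[g^*\!\Bigl(\tfrac{t-\sigma}{x}\Bigr)-g^*\!\Bigl(\tfrac{t-\tau}{x}\Bigr)\Bigr]~\leq~\mu(\tau-\sigma),
\end{equation*}
which rearranges to
\begin{equation*}
x\,g^*\!\Bigl(\tfrac{t-\tau}{x}\Bigr)+Q(\sigma)+\mu(\tau-\sigma)~\geq~x\,g^*\!\Bigl(\tfrac{t-\sigma}{x}\Bigr)+Q(\sigma).
\end{equation*}
Taking the infimum over $\sigma\leq\tau$ on the left and invoking (\ref{sdef}) gives $x\,g^*((t-\tau)/x)+\overline U(\tau)\geq \inf_\sigma\{x\,g^*((t-\sigma)/x)+Q(\sigma)\}$, and then an infimum over $\tau$ yields the desired reverse inequality.

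The main technical subtlety I anticipate is justifying the passage between the standard formulation of Lemma \ref{laxlemma} (with infimum over $y\in\mathbb R$) and our setting, where the infimum is taken over $\tau\in\mathbb R$ but only values $\tau\leq t$ are physically meaningful. The extension of $Q$ by zero to $(-\infty,0)$, together with the Lipschitz constant $\mu$ of $g^*$, ensures that any minimizer satisfies $\tau\leq t$ and is compatible with the extended initial datum, so no further regularization of $\overline U$ or truncation of the infimum is required.
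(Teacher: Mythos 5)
Your proposal follows the same route as the paper for the first identity and the Legendre transform: the paper likewise obtains \eqref{legendre} by direct computation from \eqref{gdef} and obtains the first equality in \eqref{Laxformula} by observing that, after switching the roles of $t$ and $x$, problem \eqref{maineqn} is a Cauchy problem to which Lemma \ref{laxlemma} applies. Where you genuinely diverge is the second identity: the paper does not prove it at all, deferring to the reference \cite{BH}, whereas you supply a short self-contained argument. Your argument is correct: the inequality $\inf_\tau\{x\,g^*((t-\tau)/x)+\overline U(\tau)\}\leq\inf_\tau\{x\,g^*((t-\tau)/x)+Q(\tau)\}$ follows from $\overline U\leq Q$, and the reverse inequality follows from the fact that $g^*$ is nondecreasing and $\mu$-Lipschitz, so that for $\sigma\leq\tau$ one has $x\,g^*((t-\sigma)/x)\leq x\,g^*((t-\tau)/x)+\mu(\tau-\sigma)$, and taking the infimum over $\sigma\leq\tau$ recovers $\overline U(\tau)$ via \eqref{sdef}. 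This makes the proposition essentially self-contained rather than dependent on an external citation, at the cost of a slightly longer proof; the only point you share with the paper in leaving implicit is the justification that Lemma \ref{laxlemma} (stated for finite convex $f$) applies to $g$ extended by $+\infty$ outside $[0,\mu]$, which you at least flag and which is standard since $g^*$ remains finite and Lipschitz.
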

\begin{proof}
Given the functional format of $g(\cdot)$ from \eqref{gdef}, the Legendre transform can be easily calculated as \eqref{legendre}. In order to apply the Lax formula given in Lemma \ref{laxlemma} to the problem \eqref{maineqn}, we readily notice, by switching the roles of $t$ and $x$, that problem \eqref{maineqn} is in fact an initial value problem. Adjusting the Lax formula to such Cauchy problem yields the first identity of \eqref{Laxformula}. To prove the second identity, we refer to reader to  \cite{BH}.
\end{proof}

Equation (\ref{Laxformula}) has several significant impacts.  (1) The first identity, which is an adjustment of the Lax formula to treat boundary value problems, provides a grid-free and semi-analytical solution representation of PDE \eqref{maineqn}. (2) The second identity suggests that the solution can be represented directly in terms of the inflow profile $Q(\cdot)$, without knowledge or intermediate computation of $\overline U(\cdot)$. As a result, the separating modeling of processor and buffer queue (\ref{eqn1})-(\ref{feed}) are no longer necessary, and the coupled PDEs and ODEs are replaced by the closed-form solutions provided by \eqref{Laxformula}. (3) The assumptions made on $Q(\cdot)$ suggest that the our methodological framework can accommodate very general inflow profile, which can even be a distribution.

\subsection{Explicit solution in continuous time}\label{secexplicitformula}

Given the simple and explicit functional forms of $g(\cdot)$ and $g^*(\cdot)$,  we are able to further simplify the semi-analytical expression \eqref{Laxformula} for the solution. The goal of this subsection is to derived closed-form solutions for the system in continuous time. A time-discretization will be introduced in the next subsection to express the solution in discrete time.

For reason that will become clear later, we focus our analyses on the family of {\it piecewise affine} (PWA) inflow profiles $Q(\cdot)$.  Such assumption is not too restrictive in application since any piecewise continuous function  can be approximated, to an arbitrary degree of accuracy, by piecewise affine functions. Notice that by (\ref{sdef}), $\overline U(\cdot)$ will also be piecewise affine. Given a fixed processor with length $L$, flow capacity $\mu$ and processing speed $V$, by setting $x=L$ formula (\ref{Laxformula}) becomes
\begin{equation}\label{eqn3}
U(t,\,L)~=~\min\left\{\displaystyle\inf_{\tau\geq t-L/V} Q(\tau),\qquad
\inf_{\tau<t-L/V}\big\{Q(\tau)-\mu\tau+(t-L/V)\mu\big\}\right\}
\end{equation}
We recall that $Q(\cdot)$ is non decreasing and left-continuous, therefore
\begin{align*}
\displaystyle\inf_{\tau<t-L/V}\big\{Q(\tau)-\mu\tau+(t-L/V)\mu\big\}~&\leq~\lim_{\tau\rightarrow (t-L/V)^-}\big\{Q(\tau)-\mu\tau+(t-L/V)\mu\big\}\\
~&=~Q(t-L/V)
~=~\inf_{\tau\geq t-L/V}Q(\tau)
\end{align*}
Now we can write (\ref{eqn3}) in a simplified form
\begin{equation}\label{eqn4}
U(t,\,L)~=~\inf_{\tau\leq t-L/V}\big\{Q(\tau)-\mu\tau\big\}+(t-L/V)\mu
\end{equation}
\begin{remark}
Identity (\ref{eqn4}) has a simple interpretation. Recall that the cumulative number of products that have been released from the queue  into the processor at time $t$ is given by (\ref{sdef}):
$$
\overline U(t)~\doteq~\inf_{\tau\leq t}\big\{Q(\tau)+\mu(t-\tau)\big\}
$$
Then for a product that exits the processor at time $t$, the time of its entry into the processor is $t-L/V$. In view of the natural first-in-first-out assumption, the total products that have exited the processor at time $t$ is equal to the total products that have been released into the processor at time $t-L/V$.  Replacing $t$ by $t-L/V$ in (\ref{sdef}) gives  rise to (\ref{eqn4}).
\end{remark}

The initial conditions for the buffer queue and the processor can be incorporate into our formula.  Let $q(t)$ be the queue size, and $\rho(t,\,x)$ be the density of products on the processor. Consider the following initial conditions
\begin{equation}\label{initialcond}
q(0)~=~q_0~\geq~0,\qquad \rho(0,\,\cdot\,)~=~\rho_0(\cdot)\in\mathcal{L}^1([a,\,b])
\end{equation}
where $\mathcal{L}^1([a,\,b])$ denotes the set of Lebesque integrable functions on the interval $[a,\,b]$.  Let $\Hat Q(\cdot):(-\infty,\,T)\mapsto \mathbb{R}_+$ be the left-continuous function defined by
\begin{equation}\label{Qhat}
\Hat Q(t)~=~\begin{cases}0\qquad &t~\leq~0\\
q_0+Q(t)\qquad & t~>~0\end{cases}
\end{equation}

We are now ready to state and prove one of the main results of this article, namely, the explicit instantiation of the variational principle (\ref{Laxformula}), with piecewise affine boundary profile $Q(\cdot)$ and initial conditions $q_0,\,\rho_0(\cdot)$.

\begin{proposition}\label{contform}{\bf (Continuous-time solution)} Given constants $\mu,\,L,\,V$ and piecewise affine inflow profile $Q(\cdot): [0,\,T]\mapsto \mathbb{R}_+$ with break points $\{\xi_i\in\mathbb{R}:\,i\in\mathcal{I}\}$, we associate with each time $t>L/V$ the finite set $\Omega_t\doteq\{\xi_i:~\xi_i\leq t-L/V\}\bigcup\{t-L/V\}$. Given the following initial conditions
\begin{equation}\label{initial}
q(0)~=~q_0,\qquad \rho(0,\,x)~=~\rho_0(x).
\end{equation}
we define $\Hat Q(\cdot)$ as in (\ref{Qhat}). Then the exit flow profile $U(t,\,L)$ can be written as
\begin{equation}\label{explicit}
U(t,\,L)~=~\begin{cases}
\displaystyle \int_{L-Vt}^L\rho_0(\zeta)\,d\zeta\qquad &t\in[0,\,L/V)\\
\displaystyle\min_{\tau\in\Omega_t}\big\{\Hat Q(\tau)-\mu\,\tau\big\}+(t-L/V)\mu+\int_0^L\rho_0(\zeta)\,d\zeta\qquad &t\in[L/V,\,T]
\end{cases}
\end{equation}
Moreover, $U(\cdot,\,L)$ is Lipschitz continuous.
\end{proposition}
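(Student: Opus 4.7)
The plan is to treat the two time regimes separately and reduce the semi-analytic Lax representation (\ref{eqn4}) to a finite minimum using the piecewise affine structure of $Q$. For the sub-characteristic regime $t \in [0, L/V)$, no product entering the queue after time $0$ can yet have reached $x = L$, so the exit flow is driven purely by the transport of the initial density $\rho_0$ under the pure advection equation (\ref{eqn1}). Its solution is $\rho(t, x) = \rho_0(x - Vt)$, and computing $U(t, L) = \int_0^t V\rho(s, L)\,ds$ with the substitution $\zeta = L - Vs$ yields the first branch of (\ref{explicit}) directly.

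For the post-characteristic regime $t \geq L/V$, I would start from (\ref{eqn4}), which was derived under zero initial conditions, and absorb the two initial data by a bookkeeping argument. The initial queue content $q_0$ is captured by redefining the arrival curve as $\hat Q$ (equivalent to an instantaneous pulse of $q_0$ at $t = 0^+$), so $Q$ becomes $\hat Q$ in (\ref{eqn4}). The initial density $\rho_0$ contributes an additional $\int_0^L \rho_0(\zeta)\,d\zeta$ products which all exit by time $L/V$ under FIFO and pure advection, and this quantity is simply added as a constant. The resulting infimum over $\tau \leq t - L/V$ is then reduced to a minimum: since $\hat Q$ is piecewise affine with kinks at $\{\xi_i\}$, the map $\tau \mapsto \hat Q(\tau) - \mu\tau$ is piecewise affine, so its infimum on an interval is attained at a breakpoint or at the right endpoint, i.e., on $\Omega_t$. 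The case $\tau \leq 0$ is subsumed because $\hat Q \equiv 0$ there makes $-\mu\tau \geq 0$, minimized at $\tau = 0 \in \Omega_t$.

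For Lipschitz continuity, the first branch has derivative $V\rho_0(L - Vt)$, bounded by $\mu$ under the physical constraint $\rho_0 \leq \mu/V$ that underlies the flux representation $u = V\rho$. The second branch is a constant plus a pointwise minimum of finitely many functions of $t$ with slopes bounded by $\mu$, hence Lipschitz with constant $\mu$. Continuity at the junction $t = L/V$ reduces to checking that $\min_{\tau \in \Omega_{L/V}}\{\hat Q(\tau) - \mu\tau\} = 0$, which holds since $\hat Q \equiv 0$ on $(-\infty, 0]$ and $-\mu\tau$ over $\Omega_{L/V}$ is minimized at $\tau = 0$.

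The main obstacle will be the bookkeeping step: justifying rigorously that the effect of nonzero initial data $(q_0, \rho_0)$ decomposes cleanly into the substitution $Q \mapsto \hat Q$ plus an additive constant. This is physically transparent via FIFO, but formally it is cleanest to appeal to the linearity of (\ref{eqn1}): the contribution of $\rho_0$ alone to the exit flow is constant for $t \geq L/V$ and can be superposed on the viscosity solution of (\ref{maineqn}) built from the boundary trace $\overline U$ associated to $\hat Q$.
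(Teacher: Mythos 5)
Your proposal is correct and follows essentially the same route as the paper's proof: the method of characteristics gives the branch for $t\in[0,\,L/V)$, the initial queue $q_0$ is absorbed as an upward jump of $Q(\cdot)$ at $t=0$ via $\Hat Q$, the initial density contributes the additive constant $\int_0^L\rho_0(\zeta)\,d\zeta$, and the piecewise affinity of $\Hat Q$ reduces the infimum in (\ref{eqn4}) to the finite minimum over $\Omega_t$. The only difference is cosmetic: you verify the Lipschitz bound and the matching of the two branches at $t=L/V$ by hand, whereas the paper simply invokes the fact that the Lax formula returns a viscosity solution with Lipschitz constant $\mu$.
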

\begin{proof}
 Notice that $L/V$ is the minimal time taken to traverse the processor, therefore at any positive time $t< L/V$, $U(t,\,L)$ is completely determined by the initial distribution $\rho_0(x)$ of products on the processor. A simple calculation using method of characteristics shows that for $0\leq t<L/V$,
\begin{equation}\label{pf1}
U(t,\,L)~=~\int_0^t V\rho(s,\,L)\,ds~=~\int_0^tV\rho_0(L-sV)\,ds~=~\int_{L-tV}^L\rho_0(\zeta)\,d\zeta
\end{equation}
For time beyond $L/V$, we argue that an initially positive queue $q_0$ can be treated as an upward jump in $Q(\cdot)$ at $t=0$, which is incorporated by replacing $Q(\cdot)$ with left-continuous, piecewise affine and non-decreasing function $\Hat Q(\cdot)$ defined in (\ref{Qhat}). Finally, in view of (\ref{eqn4}), it remains to show that 
\begin{equation}\label{pf2}
\inf_{\tau\leq t-L/V}\big\{\Hat Q(\tau)-\mu\tau\big\}+(t-L/V)\mu~=~\min_{\tau\in\Omega_t}\big\{\Hat Q(\tau)-\mu\tau\big\}+(t-L/V)\mu
\end{equation}
This is true since $\Hat Q$ is piecewise affine, the infimum in (\ref{pf2}) must be obtained at either one of the break points, or at $t-L/V$. See Figure \ref{discretefig} for an illustration. Since the Lax formula (\ref{Laxformula}) gives an viscosity solution to the Hamilton-Jacobi equation, $U(t,\,L)$ is Lipschitz continuous with Lipschitz constant $\mu$. 
\end{proof}

\begin{figure}[htbp]
\centering
\includegraphics[width=0.75\textwidth]{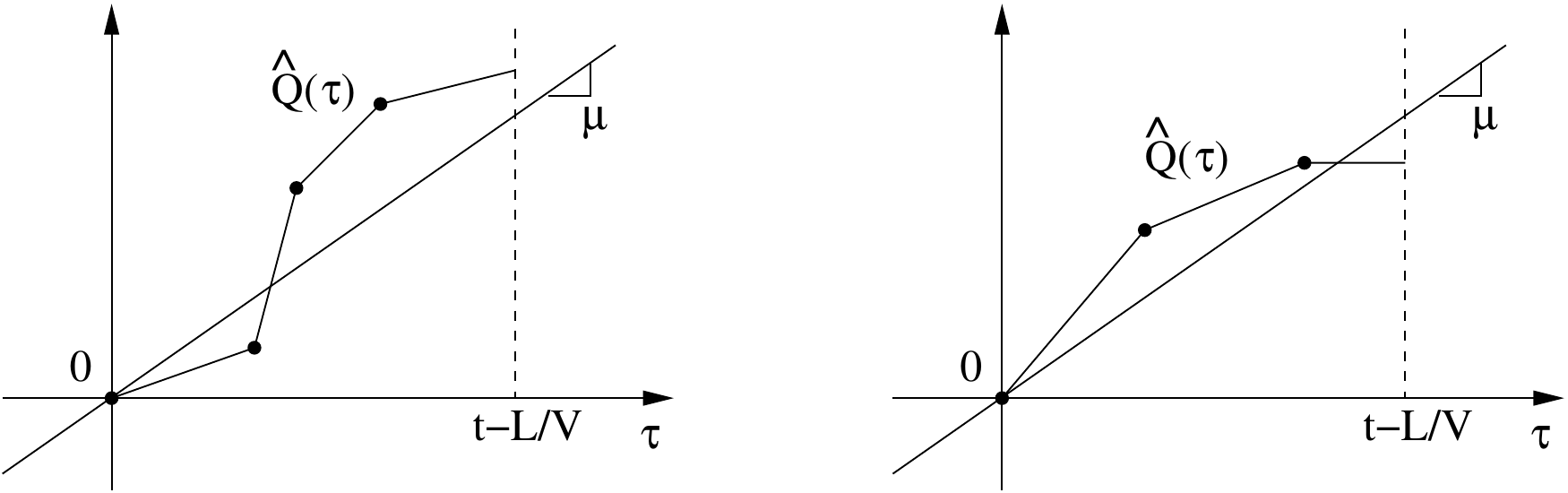}
\caption{Graphical representation of the formula (\ref{eqn4}). Since $\hat Q(\cdot)$ is piecewise affine, the infimum in the vertical difference between $\Hat Q(\tau)$ and $\mu\tau$ is obtained either at the break points of $\Hat Q(\cdot)$ (left), or at $t-L/V$ (right).}
\label{discretefig}
\end{figure}

The expression (\ref{explicit}) does not depend on any sort of approximation and is therefore exact. Notice that the feasible set $\Omega_t$ is finite, thus we have converted a continuous optimization problem into a discrete and finite form.   

\subsection{Explicit solution in discrete form}
The goal of this subsection is to derive the discrete version of (\ref{explicit}) for a given time grid.  This new algorithm will be applied to network simulation and optimization later in this paper.  We consider a time horizon $[0,\,T]$ for some $T>0$ and a uniform time grid $0=t_0<t_1<\ldots<t_N=T$ with step size $h$. 

Let $Q(\cdot)$ be any non-decreasing, left-continuous function. For notation convenience, let
$
Q_i\doteq Q(t_i-),~~\overline U_i \doteq\overline U(t_i),~~ 0\leq i \leq N
$.
Then $Q(\cdot)$ is approximated by the piecewise-affine function with break points $\{(t_i,\,Q_i)\}_{i=0}^N$. To derive an explicit  numerical scheme, we make one simplification by rounding $t_i-L/V$ to the nearest grid point to the left, and define integer constant $\Delta~\doteq~\lceil {L\over Vh}\rceil$.

We make note of the fact that the presence of the initial density $\rho_0(x)$ induces only integral terms to the Lax formula (\ref{explicit}), for which the error are quite standard in the  literature. Therefore, in the following statements of discrete formula and accompanying numerical error, we assume $\rho_0(x)\equiv 0$.

\begin{proposition}\label{discrete1}{\bf (Lax formula in discrete form)} Given constants $L,\,V,\,\mu$,  discrete values  $Q_i,\,i=0,\ldots, N$ and   initial data $q(0)=q_0>0,~\rho_0(x)\equiv 0$. Define $\Hat Q_0=0,~~\Hat Q_i=Q_i+q_0,~~1\leq i\leq N$. Let  $ U(t,\,L)$ satisfy (\ref{explicit}). Then the following discrete version of Lax formula 
\begin{equation}\label{explicitdisc}
U_{i,\,L}~=~\begin{cases}
0,\qquad & 0~\leq~i~<~\Delta;\\
\displaystyle\min_{0\leq j\leq i-\Delta}\big\{\Hat Q_j-\mu t_j\big\}+(t_i-L/V)\mu.\qquad &\Delta~\leq~i~\leq~N.
\end{cases}
\end{equation}
satisfies
\begin{equation}\label{nondecreasing}
0~\leq~U_{i,\,L}~\leq~U_{i+1,\,L}\qquad \Delta~\leq~i~\leq~N-1
\end{equation}
and
\begin{equation}\label{error}
0~\leq~U_{i,\,L}- U(t_i,\,L)~\leq~\Big(\lceil{L\over Vh}\rceil-{L\over Vh}\Big)\,h\mu, \qquad  \Delta~\leq~i~\leq~N
\end{equation}
In particular, if ${L\over Vh}$ is integer, (\ref{explicitdisc}) becomes exact.
\end{proposition}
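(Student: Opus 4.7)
The plan is to treat the two conclusions separately, relying throughout on the piecewise-affine structure of $\Hat Q(\cdot)$ with break points on the grid $\{t_j\}$. Set $F(\tau) \doteq \Hat Q(\tau) - \mu\tau$ and $F_j \doteq \Hat Q_j - \mu t_j$. A key observation, used in both parts, is that because $\Hat Q$ is non-decreasing, $F_{j+1} - F_j = (\Hat Q_{j+1} - \Hat Q_j) - \mu h \geq -\mu h$, i.e.\ $F$ can decrease by at most $\mu h$ over one grid cell. Another standing fact is $\Delta h - L/V \in [0,\,h)$, which follows from $\Delta = \lceil L/(Vh)\rceil$.

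For the monotonicity (\ref{nondecreasing}), I would compare $U_{i+1,L}$ and $U_{i,L}$ by writing $U_{i+1,L} = \min\bigl(M_i,\,F_{i+1-\Delta}\bigr) + (t_i - L/V)\mu + h\mu$, where $M_i = \min_{0\leq j\leq i-\Delta}F_j$. In the case $F_{i+1-\Delta}\geq M_i$ we immediately get $U_{i+1,L} = U_{i,L} + h\mu \geq U_{i,L}$. In the opposite case $F_{i+1-\Delta}<M_i$, we have in particular $F_{i+1-\Delta} < F_{i-\Delta}$, and the one-step Lipschitz bound above gives $M_i - F_{i+1-\Delta} \leq F_{i-\Delta} - F_{i+1-\Delta} \leq \mu h$, so that $U_{i+1,L} - U_{i,L} = (F_{i+1-\Delta} - M_i) + h\mu \geq 0$. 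Non-negativity $U_{i,L}\geq 0$ follows because the continuous solution is non-negative and, as shown below, $U_{i,L}\geq U(t_i,L)\geq 0$.

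For the error bound (\ref{error}), I would compare the discrete minimum $M_{\mathrm{disc}} \doteq \min_{0\leq j\leq i-\Delta} F_j$ with the continuous minimum $M_{\mathrm{cont}}\doteq \min_{\tau\in\Omega_{t_i}} F(\tau)$ appearing in Proposition \ref{contform}. The discrete search set $\{t_0,\ldots,t_{i-\Delta}\}$ is a subset of $(-\infty,\,t_i - L/V]$, and since $\Hat Q$ is piecewise affine with break points on the grid, every element of $\Omega_{t_i}\setminus\{t_i - L/V\}$ is in that discrete set. Thus $M_{\mathrm{disc}}\geq M_{\mathrm{cont}}$, giving the lower bound $U_{i,L}\geq U(t_i,L)$. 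For the upper bound, let $\tau^\star\in\Omega_{t_i}$ be a continuous minimizer. If $\tau^\star$ is a grid point, then $\tau^\star\leq t_{i-\Delta}$ lies in the discrete search set and $M_{\mathrm{disc}}=M_{\mathrm{cont}}$, so the error is $0$. Otherwise $\tau^\star = t_i - L/V$ lies in the half-open cell $[t_{i-\Delta},\,t_{i-\Delta}+h)$, so $\tau^\star - t_{i-\Delta} = \Delta h - L/V$. On this cell $F$ is affine with slope at least $-\mu$, hence $F(\tau^\star) \geq F_{i-\Delta} - \mu(\Delta h - L/V) \geq M_{\mathrm{disc}} - \mu(\Delta h - L/V)$. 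Rearranging yields $M_{\mathrm{disc}} - M_{\mathrm{cont}}\leq (\lceil L/(Vh)\rceil - L/(Vh))\,h\mu$, which is the claimed bound. When $L/(Vh)$ is an integer, $\Delta h = L/V$ and $\tau^\star = t_{i-\Delta}$ is itself a grid point, so the above shows the scheme is exact.

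The main obstacle is the upper error bound, specifically the case where the continuous minimizer lands at the non-grid endpoint $t_i - L/V$: one must exploit that, on the single grid cell containing this endpoint, $F$ is affine and the slope is no less than $-\mu$, which is exactly what localizes the mismatch to a rounding gap of length at most $h$. The monotonicity and the $M_{\mathrm{disc}}\geq M_{\mathrm{cont}}$ inequality are comparatively immediate, while the integer-ratio corollary follows at once by setting $\Delta h - L/V = 0$ in the bound.
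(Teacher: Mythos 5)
Your proof is correct and follows essentially the same route as the paper's: the same one-step comparison of running minima (offset by the $+h\mu$ term) for monotonicity, and the same identification of the only possible mismatch at the non-grid point $t_i-L/V$, bounded by $\mu(\Delta h - L/V)$, for the error estimate. The only cosmetic differences are that you deduce non-negativity from $U_{i,L}\geq U(t_i,L)\geq 0$ whereas the paper reduces it to checking $U_{\Delta,L}\geq 0$ via monotonicity, and your one-step bound $\mu h$ is the corrected form of what the paper (with an apparent typo) writes as $\mu\Delta$.
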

\begin{proof}
We first verify   (\ref{nondecreasing}). Notice that
$
\displaystyle\min_{0\leq j\leq i-\Delta}\big\{\Hat Q_j-\mu t_j\big\}-\min_{0\leq j\leq i+1-\Delta}\big\{\Hat Q_j-\mu t_j\big\}
$
is equal to either $0$ or $\displaystyle\min_{0\leq j\leq i-\Delta}\big\{\Hat Q_j-\mu t_j\big\}-(\Hat Q_{i+1-\Delta}-\mu t_{i+1-\Delta})$, in the latter case
\begin{align*}
&\displaystyle\min_{0\leq j\leq i-\Delta}\big\{\Hat Q_j-\mu t_j\big\}-\min_{0\leq j\leq i+1-\Delta}\big\{\Hat Q_j-\mu t_j\big\}\\
~\leq~&\Hat Q_{i-\Delta}-\mu t_{i-\Delta}-(\Hat Q_{i+1-\Delta}-\mu t_{i+1-\Delta})~\leq~\mu\Delta
\end{align*}
thus $\displaystyle U_{i,\,L}-U_{i+1,\,L}=\min_{0\leq j\leq i-\Delta}\big\{\Hat Q_j-\mu t_j\big\}-\min_{0\leq j\leq i+1-\Delta}\big\{\Hat Q_j-\mu t_j\big\}-\Delta\mu\leq 0$. 
This shows monotonicity. To show non negativity, it suffices to check that $U_{\Delta,\,L}\geq 0$.

For error estimate (\ref{error}), recall $\Delta~\doteq~\lceil {L\over Vh}\rceil$, it follows from (\ref{explicit}) and the definition of $\Omega_t$ that
\begin{align*}
U(t_i,\,L)&~=~\min_{\tau\in\Omega_{t_i}}\big\{\Hat Q(\tau)-\mu\tau\big\}+(t_i-L/V)\mu\\
&~=~\displaystyle\min\left\{\min_{0\leq j\leq i-\Delta }\big\{\Hat Q(t_j)-\mu t_j\big\},~~ \Hat Q(t_i-L/V)-\mu(t_i-L/V)\right\}+(t_i-L/V)\mu\\
&~\leq ~U_{i,\,L}
\end{align*}
and $U(t_i,\,L)<U_{i,\,L}$ if and only if ~$\displaystyle
\Hat Q(t_i-L/V)-\mu(t_i-L/V)<\min_{0\leq j\leq i-\Delta}\big\{\Hat Q(t_j)-\mu t_j\big\}
$. In this case, we have the estimate
\begin{align*}
U_{i,\,L}-U(t_i,\,L)&~=~\min_{0\leq j\leq i-\Delta}\big\{\Hat Q(t_j)-\mu t_j\big\}-\Hat Q(t_i-L/V)+\mu(t_i-L/V)\\
&~\leq~\Hat Q(t_{i-\Delta})-\mu t_{i-\Delta}-\Hat Q(t_i-L/V)+\mu(t_i-L/V)\\
&~\leq~\mu(t_i-L/V- t_{i-\Delta})\\
&~=~\mu(t_i-L/V-t_i+\Delta h)~=~\mu h\big(\Delta-{L\over Vh}\big)
\end{align*}
This verifies (\ref{error}).
\end{proof}

\begin{remark}
Inequality  \eqref{error} implies the  uniform convergence of the numerical method as $h\rightarrow 0$. In fact, one may extend such error estimates to piecewise continuous $Q(\cdot)$, using standard results on linear interpolation. 
\end{remark}

Proposition \ref{discrete1} does not impose any constraint on the time grid in terms of step size and uniformity, except that $\Delta\geq 1$. This condition implies that the time step has to be less than or equal to the minimum processing time of the processor. According to \eqref{error}, in order to reduce the numerical error,  one may either reduce the step size $h$,  or less obviously, choose $h$ in a way such that the processing time $L/V$ is a multiple of $h$.

\subsection{Network model}\label{CSCnetworkextension}
We are in a position ready to extend the Lax formula to a general supply chain network. In view of  Definition \ref{networkdef}, we introduce a few additional notations. Given a node $v\in\mathcal{V}$, the set of incoming arcs and outgoing arcs are denoted by $\mathcal{I}^v$ and $\mathcal{O}^v$, respectively. In case $|\mathcal{O}^v|>1$, we call $v$ a {\it dispersive node}. We introduce the flow allocation rate $A^{v,\,e}(t)$ for each node $v$ and $e\in\mathcal{O}^v$:
\begin{equation}\label{allocationdef}
\begin{cases}
\displaystyle 0~\leq~A^{v,\,e}(t)~\leq~1,\qquad \sum_{e\in\mathcal{O}^v}A^{v,\,e}(t)~=~1 \qquad& \hbox{if}~|\mathcal{O}^v|>1\\
A^{v,\,e}(t)\equiv 1\qquad&\hbox{if}~|\mathcal{O}^v|=1
\end{cases}
\end{equation}

These allocation rates $A^{v,\,e}(t)$ describe the proportion of the flow coming from incoming links that is going to the outgoing link $e$. They will be later considered as control of the network product flow and are subject to optimization.  For a processor $e\in\mathcal{A}$, let $Q^e(t)$ be a non decreasing, left-continuous function describing the cumulative number of products arriving at the buffer queue of the processor. Let $W^e(t)$ be a non decreasing and Lipschitz continuous function describing the cumulative number of products that have left the processor. In addition, we fix the processor length $L^e$, the processing speed $V^e$ and the flow capacity $\mu^e$. Adapting Proposition \ref{contform} to network notations,  we obtain the following dynamics describing the supply chain network.

\begin{proposition} {\bf (Network model based on variational formulation)}  Given a supply chain network $G(\mathcal{A},\,\mathcal{V})$, consider, for every $e\in\mathcal{A}$, parameters $L^e$, $V^e$, $\mu^e$, and initial conditions $q^e(0)=q_0,\, \rho^e(0,\,x)=\rho^e_0(x)$.  Then we have the following variational formulations describing the dynamics of the system.
\begin{equation}\label{nl2}
{d \over d t} Q^e(t)~=~A^{v,\,e}(t)\sum_{\bar e\in\mathcal{I}^v}{d\over d t}W^{\bar e}(t)\quad \hbox{for almost every }t,\qquad \forall v\in\mathcal{V},~e\in\mathcal{O}^v
\end{equation}
\begin{equation}\label{nl3}
\Hat Q^e(t)~=~\begin{cases} 0,\qquad & t~\leq~0\\
q^e_0+Q^e(t)\qquad & 0~<~t~\leq~T
\end{cases}\qquad\qquad\forall e\in\mathcal{A}
\end{equation}
\begin{equation}\label{nl1}
W^e(t)=\begin{cases}
\displaystyle\int_{L^e-tV^e}^{L^e}\rho^e_0(\zeta)\,d\zeta \quad &t<{L^e\over V^e}\\

\displaystyle\inf_{\tau\leq t-{L^e\over V^e}}\big\{\Hat Q^e(\tau)-\mu^e\tau\big\}+\left(t-{L^e\over V^e}\right)\mu^e+\int_0^{L^e}\rho^e_0(\zeta)\,d\zeta \quad & t\geq {L^e\over V^e}
\end{cases}
\end{equation}

\end{proposition}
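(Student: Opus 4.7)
The plan is to reduce the network claim to the single-processor result (Proposition \ref{contform}) combined with flow conservation at the vertices. Each arc $e\in\mathcal{A}$ is structurally identical to the isolated processor already analyzed, and the only coupling between arcs happens at the shared queues, which is precisely what (\ref{nl2}) encodes. So once I show that (\ref{nl2}) produces a legitimate inflow profile for every queue, the local formula (\ref{nl1}) is just the single-edge formula applied to that inflow.

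First I would verify the regularity of the queue inflow $Q^e(\cdot)$ defined by (\ref{nl2}). By Proposition \ref{contform} applied to each upstream processor $\bar e\in\mathcal{I}^v$, the function $W^{\bar e}(\cdot)$ is non-decreasing and Lipschitz continuous with constant $\mu^{\bar e}$. Since $A^{v,e}(t)\in[0,1]$ by (\ref{allocationdef}), the right-hand side of (\ref{nl2}) is a non-negative, bounded, measurable function, so integration yields a Lipschitz, non-decreasing, hence left-continuous $Q^e$. This is exactly the regularity required to invoke the Lax formula (\ref{Laxformula}) on processor $e$.

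Next I would break the apparent circular dependence between the families $\{W^e\}$ and $\{Q^e\}$ by a time-stepping argument driven by the strictly positive throughput times. Set $\tau^\ast\doteq\min_{e\in\mathcal{A}}L^e/V^e>0$. Inspection of (\ref{nl1}) shows that $W^e(t)$ for $t\in[k\tau^\ast,(k{+}1)\tau^\ast]$ depends only on $\hat Q^e$ restricted to $(-\infty,k\tau^\ast]$, which by (\ref{nl2})--(\ref{nl3}) in turn depends only on $\{W^{\bar e}\}_{\bar e\in\mathcal{I}^v}$ restricted to $[0,k\tau^\ast]$. Thus the coupled system can be solved iteratively slab by slab, and after $\lceil T/\tau^\ast\rceil$ steps one obtains a unique pair $(\{W^e\},\{Q^e\})$ on $[0,T]$. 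Equation (\ref{nl3}) is then the definition of $\hat Q^e$, (\ref{nl2}) is the node mass-balance we imposed by construction, and (\ref{nl1}) is Proposition \ref{contform} applied on each edge to the $Q^e$ produced.

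The main technical obstacle I anticipate is that, although (\ref{nl1}) as written appears to be a direct instance of Proposition \ref{contform}, the $Q^e$ built by the network construction is in general merely Lipschitz, not piecewise affine with finitely many break points; so (\ref{nl1}) must be read through the general infimum form of Proposition \ref{Lax} rather than the finite-minimum form of Proposition \ref{contform}. I would handle this by a standard density argument: piecewise affine non-decreasing profiles are uniformly dense in Lipschitz non-decreasing profiles on $[0,T]$, and the infimum in (\ref{Laxformula}) depends continuously on $Q$ (this follows from the Lipschitz dependence with constant $1$ in the sup norm), so the piecewise affine identity extends in the limit to the general Lipschitz inflow produced by the network coupling.
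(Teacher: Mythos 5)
Your proposal is correct, and its core — apply the single-processor formula edge by edge and let the node condition \eqref{nl2} supply the coupling — is exactly what the paper intends: the text offers no formal proof, merely stating that the proposition is obtained by ``adapting Proposition \ref{contform} to network notations,'' with the regularity of $Q^e$ (Lipschitz continuity of $W^{\bar e}$, a.e.\ differentiability, hence well-posedness of \eqref{nl2}) relegated to the remark that follows; your first step reproduces that remark. Where you go beyond the paper is in two places. First, your slab-by-slab induction on $\tau^\ast=\min_e L^e/V^e$ to break the circular dependence between $\{W^e\}$ and $\{Q^e\}$ is a genuine existence-and-uniqueness argument for the coupled system that the paper never supplies; it is valid because $W^e$ on $[k\tau^\ast,(k+1)\tau^\ast]$ reads $\Hat Q^e$ only up to time $k\tau^\ast$, and the base slab is fixed by the initial data alone. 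Second, your density argument for non--piecewise-affine inflows is correct (the Lax infimum is $1$-Lipschitz in $Q$ under the sup norm) but is more machinery than needed: the derivation of \eqref{eqn4} in the paper uses only that $Q$ is non-decreasing and left-continuous, and the piecewise-affine hypothesis of Proposition \ref{contform} is invoked solely to collapse the infimum to the finite minimum over $\Omega_t$; since \eqref{nl1} retains the infimum form, you could cite \eqref{eqn4} together with the initial-condition treatment directly, with no approximation step. Both additions strengthen rather than contradict the paper's (essentially absent) argument.
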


\begin{remark}
For any $e\in\mathcal{A}$, $W^e(t)$ is Lipschitz continuous by Proposition \ref{contform}, thus differentiable almost everywhere. Therefore the right hand side of (\ref{nl2}) is well defined for almost every $t$.  As a consequence, if  $e\in\mathcal{O}^v$ for some $v\in\mathcal{V}$, then $Q^e$ is also Lipschitz continuous. 
\end{remark}

The above system expresses the continuous-time solution of the network model proposed in \cite{GHK1}, where a system of coupling PDEs and ODEs were initially employed to describe the dynamics. As we shall see later, besides the closed-form solution, (\ref{nl1}) also yields better solution precision than the finite-difference schemes under minor condition.

\subsection{Some model extensions}
The conservation law \eqref{eqn1} used to represent the dynamic on a single processor is an advection equation and assumes a constant speed of all products regardless of their position or density. Such simple dynamics may not be quite realistic in certain applications, and it is the purpose of this section to introduce several model extensions and the corresponding variational methods.

\subsubsection{Location-dependent parameters}
A processor may comprise a sequence of individual stages, each one with a different processing speed and/or flow capacity. Such heterogeneity of parameters within a single processor may be captured by the following conservation law: 
\begin{equation}\label{mexteqn1}
\partial_t\rho(t,\,x)+\partial_x \min\left\{V(x)\rho(t,\,x),\,\mu(x)\right\}~=~0 \qquad (t,\,x)\in[0,\,T]\times[a,\,b]
\end{equation}
where unlike \eqref{eqn1}, the processing speed and capacity are dependent on the spatial parameter $x\in[a,\,b]$. A conservation law with an $x$-dependent flux function is often used by traffic analyst to model road heterogeneity such as lane change, curvature, road condition, etc., \cite{JinZhang}. We note that there exist variational methods for the conservation law (Hamilton-Jacobi equation) with $x$-dependent flux function (Hamiltonian); see, for example, \cite{Dolcetta}. In this paper we propose a simple treatment of \eqref{mexteqn1} by assuming  piecewise constant approximations of $V(x)$ and $\mu(x)$. By doing so, we decompose the dynamic on the processor into a set of homogeneous ones, each governed by an ODE for the queue and a PDE of the form \eqref{eqn1} with constant processing speed and capacity. Topologically, this means that we break each link in the network into smaller ones. The  variational approach proposed in this paper will apply to such new network.

\subsubsection{Density-dependent processing speed}
If the local processing speed is dependent on the density of products, the conservation law becomes 
\begin{equation}\label{mexteqn2}
\partial_t\rho(t,\,x)+\partial_x \left(V(\rho(t,\,x)) \rho(t,\,x)\right)~=~0\qquad (t,\,x)\in[0,\,T]\times[a,\,b]
\end{equation}
which is more in line with the fluid-like traffic dynamics \cite{LW, Richards}. In particular, it is assumed that the map $\phi(\cdot):\,\rho\mapsto V(\rho)\rho$ is concave and increasing. See Figure \ref{figCSCfd}. Notice that unlike traffic flow models, backward propagation of kinematic waves is not considered here; therefore there is no need to include the monotonically decreasing part of the {\it fundamental diagram} \cite{CTM1, CTM2, LW}. Similar techniques based on switching the roles of $t$ and $x$, as we demonstrate in this paper, will apply to conservation law \eqref{mexteqn2} and the corresponding Hamilton-Jacobi equation, and the variational solution representation can be consequently obtained. Due to space limitation, we omit technical details and refer the reader to \cite{BH} for a specific discussion. Notice that for general functional forms of $\phi(\cdot)$, the variational formulation amounts to a nonconvex optimization problem, for which closed-form solutions are hardly available. In a special case where $\phi(\cdot)$ is piecewise affine, it can be shown that the variational formulation becomes an optimization problem over a set of finite elements; this is directly related to the fact that there are only finite number of wave speeds, see the right part of Figure \ref{figCSCfd}.

\begin{figure}[h!]
\centering
\includegraphics[width=\textwidth]{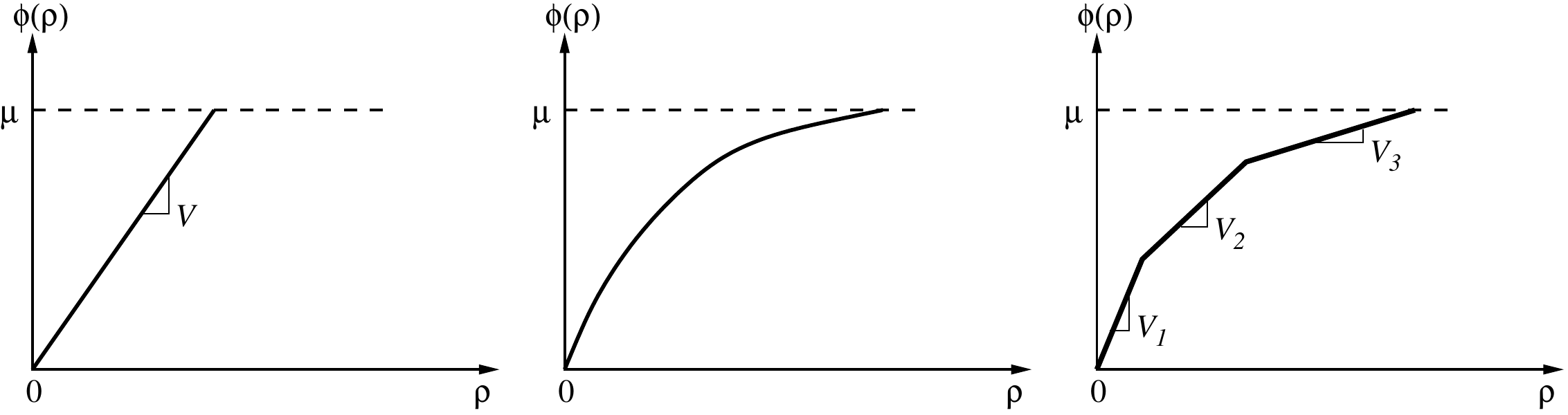}
\caption{Examples of concave and increasing flux function $\phi(\cdot)$ where $\rho$ denotes density, $\mu$ denotes flow capacity, and $V,\,V_1,\,V_2,\,V_3$ denote processing speeds. Left: the linear flux function employed in this paper. Middle: the smooth flux function which leads to a general continuous optimization problem. Right: the piecewise affine flux function which leads to an optimization problem over a finite set with cardinality equal to the number of affine pieces.}
\label{figCSCfd}
\end{figure}

\section{Optimizing network flow}\label{CSCnetwork}
This section defines and solves the following network optimization problem: given a general supply chain network as in Definition \ref{CSCdef}, with dynamics at processors and buffer queues described by the variational formulation \eqref{nl2}-\eqref{nl1}. Assume that at each dispersive node one can determine the allocation of flows by controlling $A^{v,\,e}(t)$, so that the network is optimized subject to some other constraints. A general objective functional of such optimization problem is
\begin{equation}\label{cost}
\min_{A^{v,\,e}(t)}\sum_{e\in\mathcal{A}}\mathcal{F}^e(Q^e,\,W^e)
\end{equation}
The continuous-time optimization problem is given by (\ref{cost}), subject to (\ref{nl2}), (\ref{nl3}) and (\ref{nl1}). It is shown in the next subsection that by properly time-discretize the system and by introducing binary variables, the optimization problem can be formulated as a {\it mixed integer program} (MIP).

\subsection{Time discretization}
We adapt the ``discretize-then-optimize" strategy by time-discretizing system (\ref{nl2})-(\ref{nl1}) and then optimizing the discrete system via mathematical programming techniques. To do this,  we prescribe a time grid $\{t_i\}_{i=0}^N$ with step size $h$, and adapt the following notations
$
Q^e_i\doteq Q^e(t_i)$,  $W^e_i\doteq W^e(t_i)$, $A^{v,\,e}_i\doteq A^{v,\,e}(t_i)$, $\Delta^e\doteq \lceil {L^e\over V^eh}\rceil
$.  

To avoid differentiation arising from (\ref{nl2}), we invoke the following lemma.
\begin{lemma}\label{lemmarate} Constraint (\ref{nl2}) can be equivalently expressed as
\begin{equation}\label{nl2'1}
\sum_{e\in\mathcal{O}^v}Q^e(t)~=~\sum_{\bar e\in\mathcal{I}^v}W^{\bar e}(t)
\end{equation}
\begin{equation}\label{nl2'2}
Q^e(t_1)~\leq~Q^e(t_2),\qquad \forall ~ t_1~<~t_2,~~ e\in\mathcal{O}^v
\end{equation}
\end{lemma}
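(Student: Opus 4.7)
I would prove the lemma by establishing the two directions of equivalence separately. The forward implication is essentially an integration of (\ref{nl2}) after summation over outgoing arcs, while the reverse implication requires reconstructing admissible allocation rates from the aggregate cumulative variables $Q^e$ and $W^{\bar e}$.

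For the forward direction (\ref{nl2}) $\Rightarrow$ (\ref{nl2'1}), (\ref{nl2'2}), I would sum equation (\ref{nl2}) over $e \in \mathcal{O}^v$. Using the partition-of-unity property $\sum_{e \in \mathcal{O}^v} A^{v,e}(t) = 1$ from (\ref{allocationdef}), the right-hand side collapses to $\sum_{\bar e \in \mathcal{I}^v} \frac{d}{dt}W^{\bar e}(t)$. Integrating from $0$ to $t$ and using the natural initial conditions $Q^e(0) = 0 = W^{\bar e}(0)$ (cumulative counts vanish at $t=0$) yields (\ref{nl2'1}). For (\ref{nl2'2}), note that each $A^{v,e}(t) \geq 0$ and, by the preceding proposition, $W^{\bar e}(\cdot)$ is Lipschitz and nondecreasing so $\frac{d}{dt}W^{\bar e}(t) \geq 0$ a.e.; hence $\frac{d}{dt}Q^e(t) \geq 0$ a.e., and monotonicity of $Q^e$ follows by absolute continuity.

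For the reverse direction, given $Q^e$ satisfying (\ref{nl2'1}) and (\ref{nl2'2}), I would construct the allocation rates pointwise a.e. Differentiating (\ref{nl2'1}) — which is legitimate a.e.\ since both sides are Lipschitz, being finite sums of Lipschitz cumulative curves — gives
\begin{equation*}
\sum_{e \in \mathcal{O}^v} \frac{d Q^e(t)}{dt} \;=\; \sum_{\bar e \in \mathcal{I}^v} \frac{d W^{\bar e}(t)}{dt} \;=:\; S(t) \qquad \text{a.e.}
\end{equation*}
On the measurable set $\{t: S(t) > 0\}$, define $A^{v,e}(t) := \frac{d Q^e(t)/dt}{S(t)}$; on its complement, set $A^{v,e}(t) := 1/|\mathcal{O}^v|$. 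By (\ref{nl2'2}) each numerator is nonnegative and bounded above by $S(t)$, so $A^{v,e}(t) \in [0,1]$; summation over $e \in \mathcal{O}^v$ gives $1$ by construction. Relation (\ref{nl2}) then holds a.e.: on $\{S > 0\}$ by the definition of $A^{v,e}$, and on $\{S=0\}$ trivially since then $\frac{d}{dt}Q^e(t) = 0$ a.e.\ as well (as $0 \leq \frac{d}{dt}Q^e \leq S = 0$).

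The main obstacle I anticipate is a clean handling of the degenerate set $\{t : S(t) = 0\}$, together with the need to verify measurability of the constructed $A^{v,e}$ — this follows because both $dQ^e/dt$ and $S$ are measurable as a.e.\ derivatives of absolutely continuous functions, and division by a positive measurable function preserves measurability. A secondary point deserving explicit mention is that the monotonicity hypothesis (\ref{nl2'2}) is used precisely to guarantee $A^{v,e}(t) \geq 0$: without it, $\frac{d}{dt}Q^e$ could be negative, contradicting the admissibility requirement (\ref{allocationdef}).
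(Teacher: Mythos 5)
Your proposal is correct and follows essentially the same route as the paper: summation over $e\in\mathcal{O}^v$ plus integration for the forward direction, and differentiation of (\ref{nl2'1}) followed by defining $A^{v,e}$ as the ratio of derivatives for the converse. The only difference is that you explicitly handle the degenerate set where $\sum_{\bar e\in\mathcal{I}^v}\frac{d}{dt}W^{\bar e}(t)=0$ and check measurability, details the paper's proof leaves implicit (its displayed definition of $A^{v,\,e}$ even divides by $\sum_{\bar e}W^{\bar e}(t)$ rather than its derivative, evidently a typo); your added care is a genuine, if minor, improvement.
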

\begin{proof}
``\eqref{nl2}$\Rightarrow$\eqref{nl2'1}, \eqref{nl2'2}". In view of  (\ref{allocationdef}), summing up (\ref{nl2}) over $e\in\mathcal{O}^v$ and integrating over time give rise to (\ref{nl2'1}). Moreover, 
$$
{d\over dt}Q^e(t)~=~A^{v,\,e}(t)\sum_{\bar e\in\mathcal{I}^v}{d\over dt}W^{\bar e}(t)~\geq~0\qquad\hbox{for almost every }t
$$
this establishes (\ref{nl2'2}). 

\noindent ``\eqref{nl2'1}, \eqref{nl2'2}$\Rightarrow$\eqref{nl2}". Differentiating (\ref{nl2'1}) gives
$$
\sum_{e\in\mathcal{O}^v}{d\over dt}Q^e(t)~=~\sum_{\bar e\in\mathcal{I}^v}{d\over dt}W^{\bar e}(t)
$$
define $A^{v,\,e}(t)~\doteq~{d\over dt}Q^e(t)/\sum_{\bar e\in\mathcal{I}}W^{\bar e}(t),\,e\in\mathcal{O}^v$, then it is straightforward to verify that (\ref{allocationdef}) holds.
\end{proof}

Note that (\ref{nl2'1}) and (\ref{nl2'2}) guarantee conservation of flux across the junction and non-negativity of the flux. After applying Lemma \ref{lemmarate}, information of the allocation rates $A^{v,\,e}$ is implicitly described by $Q^e$; and it is a matter of simple calculation to recover such information from the solution.

The discrete-time forms of (\ref{nl3}), (\ref{nl1}) are
\begin{equation}\label{optimize2}
\Hat Q^e_0~=~0,\qquad \Hat Q^e_i~=~q_0^e+Q_i^e,\qquad 1~\leq~i~\leq~N
\end{equation}
\begin{equation}\label{optimize3}
W^e_i~=~\begin{cases}0\qquad & i~<~\Delta^e\\
\displaystyle\min_{0\leq j\leq i-\Delta^e}\big\{\Hat Q^e_j-\mu^et_j\big\}+(t_i-L^e/V^e)\mu^e\qquad & i~\geq~\Delta^e
\end{cases}
\end{equation}

\noindent We introduce the variables $R^e_i$ for $e\in\mathcal{A},\,  \Delta^e\leq i\leq N$, defined recursively via the following.
\begin{equation}\label{dummydef}
R^e_{i}~=~\begin{cases}\Hat Q^e_{i-\Delta^e}-\mu^et_{i-\Delta^e}\qquad & i~=~\Delta^e\\
\min\Big\{R^e_{i-1},\quad \Hat Q^e_{i-\Delta^e}-\mu^et_{i-\Delta^e}\Big\}\qquad &\Delta^e<~i~\leq~N+\Delta^e
\end{cases} 
\end{equation}
Then the second equation of (\ref{optimize3}) becomes
\begin{equation}\label{optimize3'}
W_i^e~=~R_i^e+(t_i-L^e/V^e)\mu^e\qquad i~\geq~\Delta^e
\end{equation}
By virtue of this new variable $R_i^e$ and \eqref{dummydef}, for each time step $i$ we need to evaluate the ``min" function only once, thus reducing the number of variables and operations needed in the problem. In order to linearize the constraints involving the ``min" operator, we  introduce the binary  variables $\beta_i^e\in\{0,\,1\},\, i=\Delta^e+1,\ldots, \Delta^e+N$,  and equivalently write condition (\ref{dummydef}) as
\begin{equation}\label{binary}
R_{\Delta^e}^e~=~\Hat Q^e_0-\mu^e t_0,\quad
\begin{cases}
R_{i-1}^e+(\beta_i^e-1)M\leq R_i^e~\leq~R_{i-1}^e\\
\Hat Q_{i-\Delta^e}^e-\mu^et_{i-\Delta^e}-M\beta_i^e \leq R_{i}^e \leq \Hat Q_{i-\Delta^e}^e-\mu^et_{i-\Delta^e}
\end{cases}
\end{equation}
where $M$ is chosen to be a sufficiently large constant.  We are now ready to state the main result of this section.

\begin{theorem}\label{CSCtheorem}{\bf (MIP formulation of network optimization problem)}
Consider a supply chain network as in Definition \ref{networkdef}, with parameters $L^e$, $V^e$, $\mu^e$, and initial conditions $q^e(0)=q^e_0$, $\forall e\in\mathcal{A}$. Define a linear objective function $\mathcal{F}(Q^e,\,W^e,\,R^e)$. Given a time grid 
$0=t_0<t_1\ldots<t_N=T$ with step size $h$,
then the network optimization problem (\ref{cost}), (\ref{nl2}), (\ref{nl3}) and (\ref{nl1}) can be formulated as the following mixed integer program.
\begin{equation}\label{thm1}
\min_{Q^e_i,\,W^e_i,\,R^e_i}\mathcal{F}(Q^e_i,\,W^e_i,\,R^e_i)
\end{equation}
subject to
\begin{equation}\label{thm2}
\sum_{e\in\mathcal{O}^v}Q^e_i~=~\sum_{\bar e\in\mathcal{I}^v}W^{\bar e}_i,\qquad Q^e_{i-1}~\leq~Q^e_{i},\qquad 1\leq i\leq N
\end{equation}
\begin{equation}\label{thm3}
R_{\Delta^e}^e=-\mu^et_0,\quad R_{i-1}^e+(\beta_i^e-1)M\leq R_i^e \leq R_{i-1}^e,\quad \Delta^e+1\leq i\leq\Delta^e+N
\end{equation}
\begin{equation}\label{thm4}
 q_0^e+Q_{i-\Delta^e}^e-\mu^et_{i-\Delta^e}-M\beta_i^e\leq R_{i}^e \leq q_0^e+ Q_{i-\Delta^e}^e-\mu^et_{i-\Delta^e},~~ \Delta^e+1\leq i \leq \Delta^e+N
\end{equation}
\begin{equation}\label{thm5}
W_i^e~=~R_i^e+(t_i-L^e/V^e)\mu^e, \qquad  \Delta^e \leq i \leq N
\end{equation}
\begin{equation}\label{thm6}
\Delta^e~\doteq~\lceil{L^e\over V^eh}\rceil,\qquad \beta_i^e\in\{0,\,1\},\qquad e\in\mathcal{A},\qquad v\in\mathcal{V}
\end{equation}
where $M$ is a sufficiently large constant.
\end{theorem}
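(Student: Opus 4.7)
The plan is to walk through the three ingredients of the network model in turn---the junction coupling \eqref{nl2}, the initial-data substitution \eqref{nl3}, and the single-processor Lax formula \eqref{nl1}---and show that after time discretization each ingredient is captured exactly by the indicated constraints of the MIP. The only non-routine step is the linearization of the pointwise ``min'' operator via binary variables, which I will handle last.

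First I would dispose of the junction condition \eqref{nl2}. By Lemma \ref{lemmarate}, \eqref{nl2} is equivalent to the pair \eqref{nl2'1}--\eqref{nl2'2}. Evaluating these at the grid points $t_i$ and using the monotonicity between consecutive nodes immediately yields the two constraints in \eqref{thm2}; conversely, any discrete solution of \eqref{thm2} can be linearly interpolated to produce continuous $Q^e,\,W^e$ satisfying \eqref{nl2'1}--\eqref{nl2'2}, and hence, by the lemma, admissible allocation rates $A^{v,e}(t)$ realizing \eqref{nl2}. The substitution \eqref{nl3} is discretized without loss as \eqref{optimize2}; note that in the MIP this substitution has been absorbed into the right-hand side of \eqref{thm4} by writing $q_0^e+Q^e_{i-\Delta^e}$ in place of $\hat Q^e_{i-\Delta^e}$.

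Next I would discretize the single-processor dynamics \eqref{nl1} using Proposition \ref{discrete1}, which delivers \eqref{optimize3}. To prepare it for a linear formulation I would introduce the auxiliary variable $R^e_i$ defined recursively by \eqref{dummydef}; a direct induction on $i$ shows that $R^e_i = \min_{0\le j\le i-\Delta^e}\{\hat Q^e_j-\mu^e t_j\}$, so that \eqref{optimize3} is equivalent to \eqref{optimize3'}, which is exactly constraint \eqref{thm5}. The initialization $R^e_{\Delta^e}=\hat Q^e_0-\mu^e t_0=-\mu^e t_0$ (using $\hat Q^e_0=0$) is recorded in the first part of \eqref{thm3}.

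The main---and only really delicate---step is to show that the recursive definition \eqref{dummydef} is equivalent, for sufficiently large $M$, to the mixed-integer inequalities in \eqref{thm3}--\eqref{thm4}. The argument is the standard big-$M$ disjunction: the second inequality on each line of \eqref{binary} forces $R^e_i$ to lie on or below both $R^e_{i-1}$ and $\hat Q^e_{i-\Delta^e}-\mu^e t_{i-\Delta^e}$, while the first inequality on each line, activated by $\beta^e_i\in\{0,1\}$, forces $R^e_i$ to actually equal whichever of the two quantities is selected by $\beta^e_i$. If one chooses $\beta^e_i$ to indicate the argmin, the two active upper bounds force equality with the smaller argument and render the inactive lower bound vacuous (provided $M$ exceeds the diameter of the range of $\hat Q^e_j-\mu^e t_j$ over the horizon, which is finite because $Q^e$ is bounded by the upstream throughput). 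Conversely, any feasible $(R^e_i,\beta^e_i)$ satisfies $R^e_i=\min\{R^e_{i-1},\,\hat Q^e_{i-\Delta^e}-\mu^e t_{i-\Delta^e}\}$ by inspecting the two cases of $\beta^e_i$. The hardest part of the proof is therefore picking an explicit admissible value of $M$---for instance, any constant exceeding $\max_e(\mu^e T+q^e_0+Q^e(T))$---and verifying that it makes the reverse implication tight; once this big-$M$ equivalence is established, chaining the equivalences above with the discretization of \eqref{nl2}--\eqref{nl1} gives exactly the MIP \eqref{thm1}--\eqref{thm6}.
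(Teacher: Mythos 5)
Your proposal is correct and follows essentially the same route as the paper: Lemma \ref{lemmarate} for the junction constraints \eqref{thm2}, the discretized Lax formula of Proposition \ref{discrete1} together with the auxiliary recursion \eqref{dummydef} for \eqref{thm5}, and the big-$M$ linearization \eqref{binary} for \eqref{thm3}--\eqref{thm4}. The paper's own proof is merely a two-line pointer to that preceding development, so your explicit verification of the big-$M$ disjunction and the choice of $M$ simply fills in details the authors left implicit.
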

\begin{proof}
We have already established (\ref{thm2}) from Lemma \ref{lemmarate} and (\ref{thm5}) from (\ref{optimize3'}). (\ref{thm3}) and (\ref{thm4}) are immediate consequences of (\ref{optimize2}) and (\ref{binary}).
\end{proof}

\vspace{0.15 in}

\begin{remark}\label{remark1}
If $|\mathcal{O}^v|~=~1$, i.e. node $v$ is non-dispersive, the non-negative flow condition $Q_{i-1}^e\leq Q_i^e$ in (\ref{thm2}) is automatically satisfied since $W_i^{\Bar e},~\Bar e\in\mathcal{I}^v$ are guaranteed to be non-decreasing with respect to the time step $i$, due to Proposition \ref{discrete1}. Therefore the non-negative flow constraints can be dropped for such node.
\end{remark}

\vspace{0.15 in}

\begin{remark}
The objective function \eqref{thm1} is, in its own form, an arbitrary real-valued function of $Q_i^e$, $W_i^e$ and $R_i^e$, for all $0\leq i\leq N$, $e\in\mathcal{A}$. Throughout this paper, the objective function is chosen to be linear for the ease of computation, see Section \ref{nofinitebuffer} and \ref{sectime}. However, in a manufacturing environment with different cost scenarios, the objective function is often nonlinear, nonconvex and even nonsmooth. If the objective function is smooth and convex, then the resulting program can still be solved efficiently with commercial software. If the objective function is nonsmooth but is convex and admits well-defined subgradients, then using branch and bound, the relaxed problem can still be handled relatively well by ellipsoidal method, analytic center cutting-plane method, etc. 
\end{remark}

\subsection{Model extensions}\label{CSCmodelextension}
 We provide some discussion on two straightforward extensions of the proposed MIP formulation.
 
 \subsubsection{Finite buffers}
In a realistic manufacturing process, the capacity of the buffering queue is usually limited.  Articulation of this type of condition in the optimization procedure requires the queue size $q^e(t)$ to be expressed explicitly.  Recall that the queue size can be written as $q^e(t)=Q^e(t)-\overline U^e(t)$, where $\overline U^e(t)$ is given by (\ref{sdef}).  Thus we have
\begin{equation}\label{queuelength}
q^e(t)~=~Q^e(t)-\overline U^e(t)~=~(Q^e(t)-\mu^e t)-\inf_{\tau\leq t}\big\{Q^e(\tau)-\mu^e\tau\big\}
\end{equation}
The discrete-time expression for the queue reads 
\begin{equation}\label{queuedisc}
q^e_i~=~Q^e_i-\mu^e t_i-\min_{0\leq j\leq i}\big\{Q^e_j-\mu^e t_j\big\}~=~Q^e_i-\mu^e t_i-R^e_{i+\Delta^e},\qquad i=0,\,\ldots,\,N
\end{equation}
The finite buffer constraint can then be implemented in the mixed integer program by adding  the following linear constraints
\begin{equation}\label{thm8}
Q_i^e-\mu^e t_i-R_{i+\Delta^e}^e~\leq~C_q^e,\qquad e\in\mathcal{A},\quad 0\leq i\leq N
\end{equation}
where $C_q^e$ denotes the buffer queue capacity on link $e$.

\subsubsection{Inventory cost}\label{subsecinventory} In certain cases the handling of products in the buffer queue may incur additional costs; these may include fixed costs (e.g. warehouse rents) and variable costs (e.g. product degradation). Such situation can be easily handled in our framework by adding the following cost function to the objective function:
\begin{equation}\label{extendcost}
\sum_{e\in\mathcal{A}}\alpha^e_q\sum_{i=0}^N(Q^e_i-\mu^e t_i-R_{i+\Delta^e}^e),\qquad \alpha_q^e\geq 0,\quad e\in\mathcal{A}
\end{equation}  
where constant $\alpha_q^e$ measures the cost per unit storage. With the revised objective function, the decision variables may further include the network inflow profiles which allow the buffer queues to remain minimum, while maintaining a maximum throughput of the supply chain network.

\subsection{Comparison with existing approaches}\label{CSCcomparison}

The MIP (\ref{thm1})-(\ref{thm6}) differs from the one proposed by \cite{FGHKM} in a number of ways. First of all, the governing equations for the system in our paper are derived from a variational perspective, and the main variables are cumulative product counts, which are related to each other via the Hamilton-Jacobi equations and the junction conditions. In contrast to the model of \cite{GHK1, GHK2}, the dynamics of the processor and the buffering queue are simultaneous handled by the Lax formula. Our approach accepts discontinuous boundary datum $Q(\cdot)$, which implies the presence of a delta-distribution in the flow; such situation cannot be handled by the conservation law without a separate modeling of the queue. This intuitively explains why in the variational formulation, an explicit treatment of the queue is unnecessary.  The Lax formula not only avoids the numerical issue arising from the discontinuity in the ODE \eqref{eqn2}-\eqref{feed} (a detailed study of such issue will be provided in Section  \ref{CSCcasestudy}), but also facilitates efficient simulation and optimization of the network by reducing the number of variables needed and memory usage, due to the `grid-free' nature of the algorithm. The conservation law-based models, on the other hand, rely on a two-dimensional grid and are restricted by the CFL conditions, thus could potentially lead to large systems which are computationally expensive.

Next we compare the number of variables used in the MIP (\ref{thm1})-(\ref{thm6}) with those presented in \cite{FGHKM} with a two-point spatial discretization. Assume the numbers of arcs in the network is $|\mathcal{A}|$, and that the number of time intervals is $N$. Our proposed MIP has $3N|\mathcal{A}|$ real variables and $N|\mathcal{A}|$ binary variables; the MIP of \cite{FGHKM} has the same number of real and binary variables. However, in the latter approach, a two-point spatial discretization may be too coarse to properly represent the PDE. If the spatial grid is to be refined by a factor of $n$, the CFL condition \eqref{CFL} implies that the number of variables needed for the spatiotemporal grid will increase by a factor of $n^2$, and the number of binary variables will also increase by a factor of $n$. Such fact reveals a trade-off between numerical accuracy and computational efficiency for the conservation law models and the MIP built upon them. The variational approach, on the other hand, does not invoke spatial discretization and has no restrictions on the time step. Therefore, to achieve the same level of numerical precision, our proposed MIP is arguably much smaller in size compared to that of \cite{FGHKM}, and can be computed more efficiently.

\section{Numerical example}\label{CSCnumerical}
In this section, we conduct a sequence of numerical studies of the variational approach and the resulting MIP, with modeling extensions discussed in Section \ref{CSCmodelextension}. Consider a test network shown in Figure \ref{seven}, which consists of seven arcs (processors)  $a-g$, and four nodes $1-4$. The primary control variables are the time-varying flow allocation rate $A^{1,\,b}(t)$ and $A^{2,\,e}(t)$ at dispersive nodes $1$ and $2$, respectively. The time horizon is fixed to be $[0,\,10]$. Network parameters employed in our numerical test are shown in Table \ref{tabparameters}. In addition, we assume zero initial conditions: $q_0^e=0,\, \rho_0^e(x)=0,\,\,\forall e\in\mathcal{A}$.

\begin{figure}[htbp]
\centering
\includegraphics[width=0.48\textwidth]{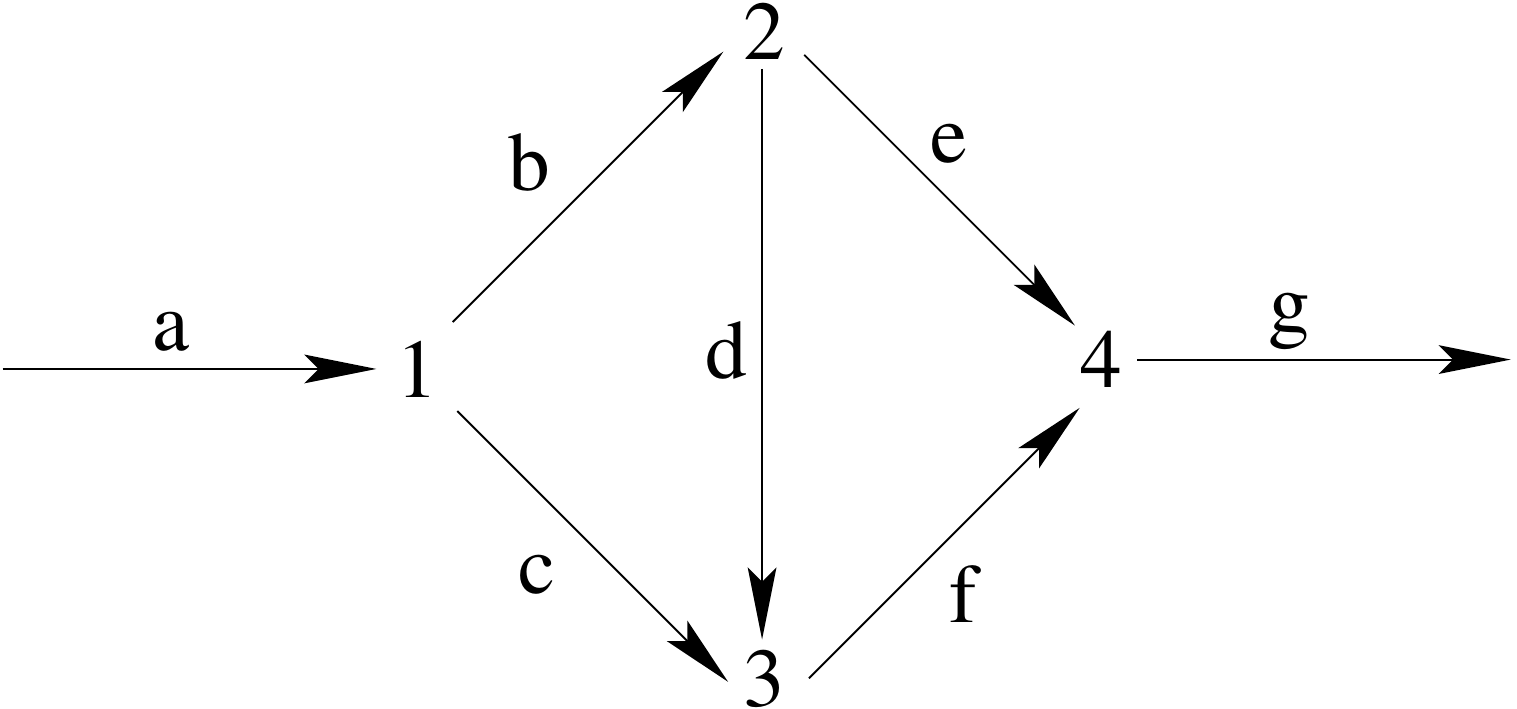}
\caption{A test network consisting of seven arcs and four nodes}
\label{seven}
\end{figure}

\begin{table}[h!]\label{tablepara}
\centering
\begin{tabular}{|c|c|c|c|c|c|c|c|}
\hline
Processor  & $a$ & $b$  & $c$  &  $d$  &  $e$  &  $f$  &  $g$ \\
\hline\hline
$L^e$         &   2    &    2    &   2    &     2    &    2    &    2    &    2    \\
\hline
$V^e$         &   2    &    1    &   2    &     4    &    2    &    2    &    2    \\
\hline
$\mu^e$     &  15  &   6    &  5    &    4    &   3.5  &   8    &  14   \\
\hline
\end{tabular}
\caption{Processor parameters}
\label{tabparameters}
\end{table}

All mixed integer programs presented in this section were solved with ILOG Cplex 12.1.0 \cite{cplex}, on the HPC platform with Intel Xeon X$5675$ Six-Core 3.06 GHz processor, provided by the Penn State High Performance Computing Systems \cite{HPC}.

\subsection{Without buffer size constraints}\label{nofinitebuffer}
As our first scenario, we assume that the flow $f^a(t)$ into the network through processor $a$ is fixed and given by 
$$
f^a(t)~=~
\begin{cases}
37.5 \qquad & 0\leq t\leq 2
\\
0\qquad &\hbox{otherwise}
\end{cases}
$$
It is assumed that all the buffers have infinite capacity. The objective is to maximize the total throughput of the network within the time horizon, namely 
\begin{equation}\label{obj1}
W^g(10)
\end{equation}
 where the notations have the same meaning as before. The optimal allocation rates $A^{1,\,b}(t),\,A^{2,\,e}(t)$ are computed from the MIP formulation introduced by Theorem \ref{CSCtheorem}, and are illustrated graphically in Figure \ref{figinfA1} and Figure \ref{figinfA2}.  In the optimized network flow profile, only processors $a$, $b$ and $c$ have non-empty buffering queues associated with them; this is shown in Figure \ref{figinfqueue}.  The optimal value of the throughput $W^g(10)$ is 58.75.

\begin{figure}[htbp]
\begin{minipage}[b]{.49\textwidth}
\centering
\includegraphics[width=1\textwidth]{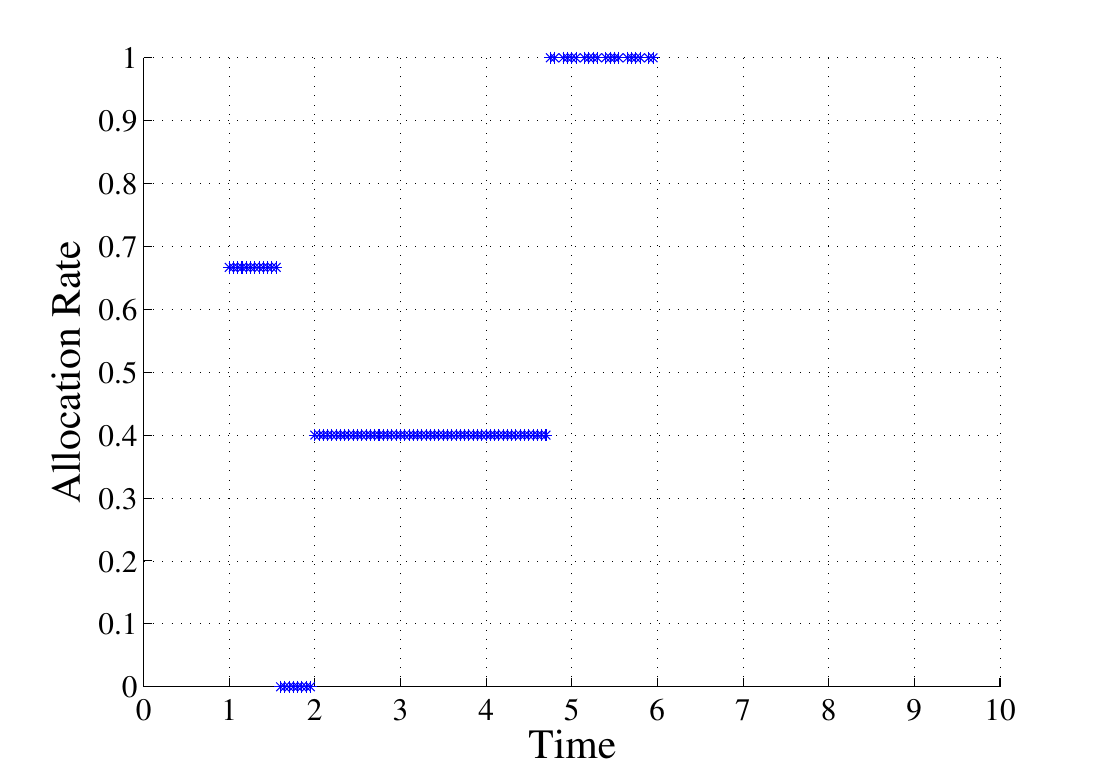}
\caption{\small Without buffer size constraints: optimal allocation rate $A^{1,\,b}$.}
\label{figinfA1}
\end{minipage}
\hspace{0.001cm}
\begin{minipage}[b]{.49\textwidth}
\centering
\includegraphics[width=1\textwidth]{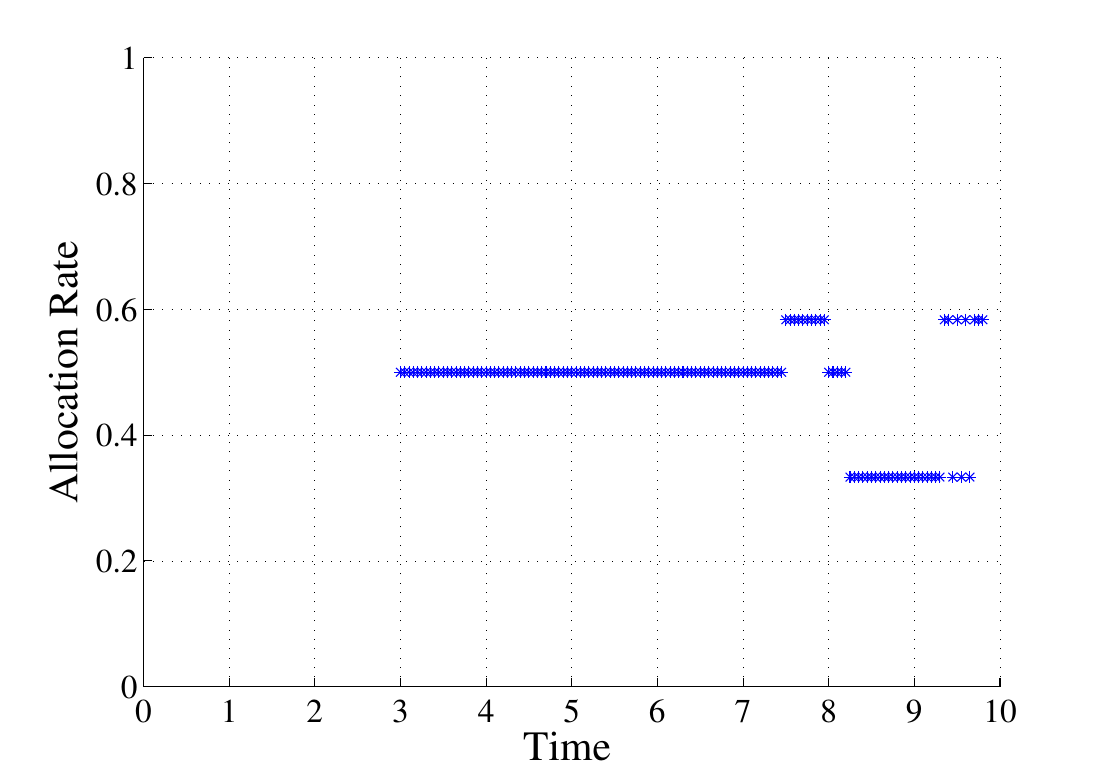}
\caption{\small Without buffer size constraints: optimal allocation rate $A^{2,\,e}$.}
\label{figinfA2}
\end{minipage}
\end{figure}

\begin{figure}[h!]
\centering
\includegraphics[width=0.75\textwidth]{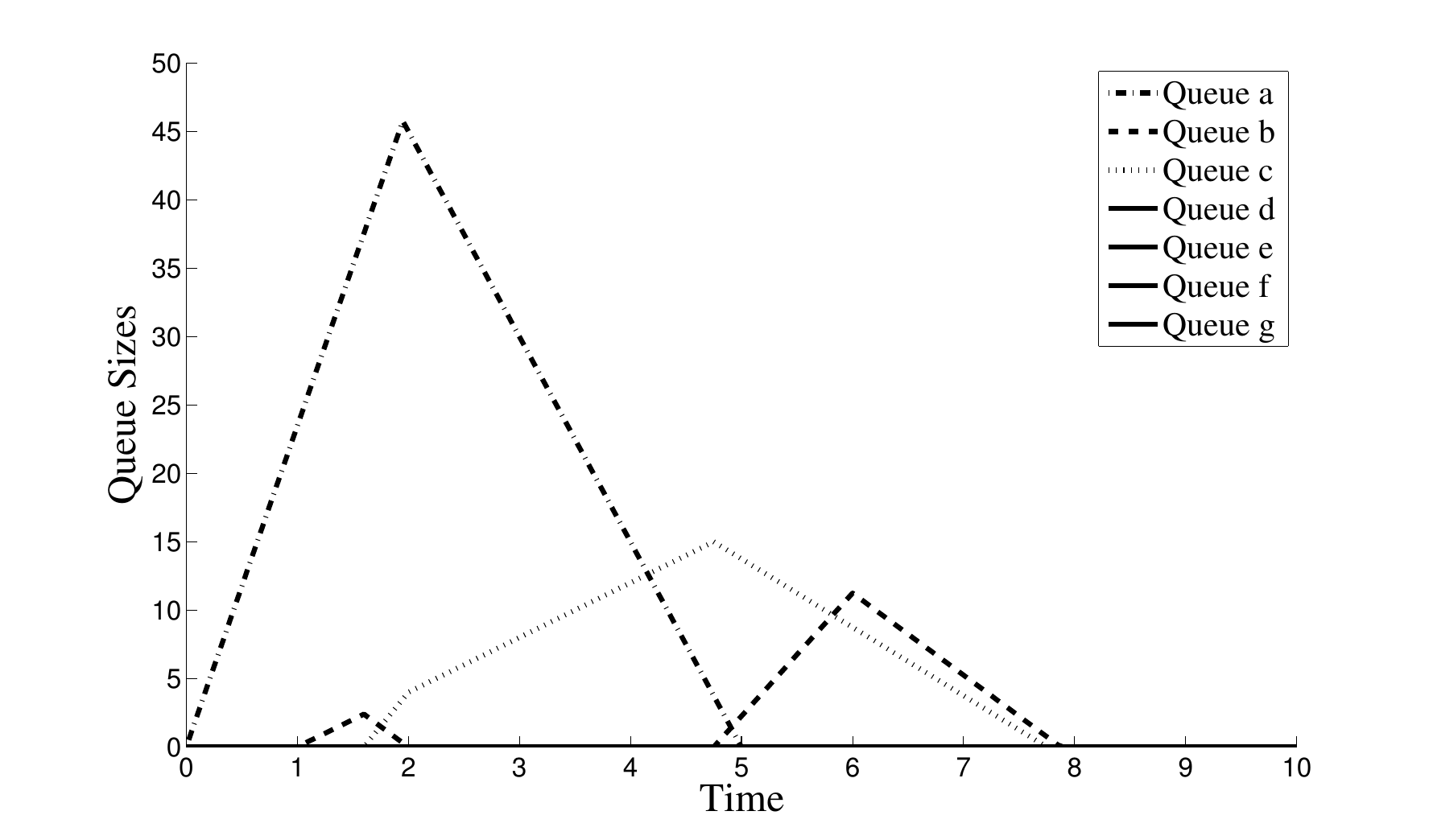}
\caption{Without buffer size constraints: queues upstream of each processor}
\label{figinfqueue}
\end{figure}

 \subsection{With finite buffer constraints}\label{finitebuffer}
We consider additional buffer capacity constraints in the MIP, namely, $q^b_i\leq 10$,~ $q^c_i\leq 10$,~ $i=0,\ldots, N$.  Such constraints are implemented by inserting inequality constraints (\ref{thm8}) to the MIP.  Figure \ref{figfinitequeue} shows the time-dependent queue sizes in the new optimal network flow profile. Notice that the queue associated with processor $a$ cannot be controlled since the inflow $f^a(\cdot)$ is fixed.

\begin{figure}[h!]
\begin{minipage}[b]{.49\textwidth}
\centering
\includegraphics[width=1\textwidth]{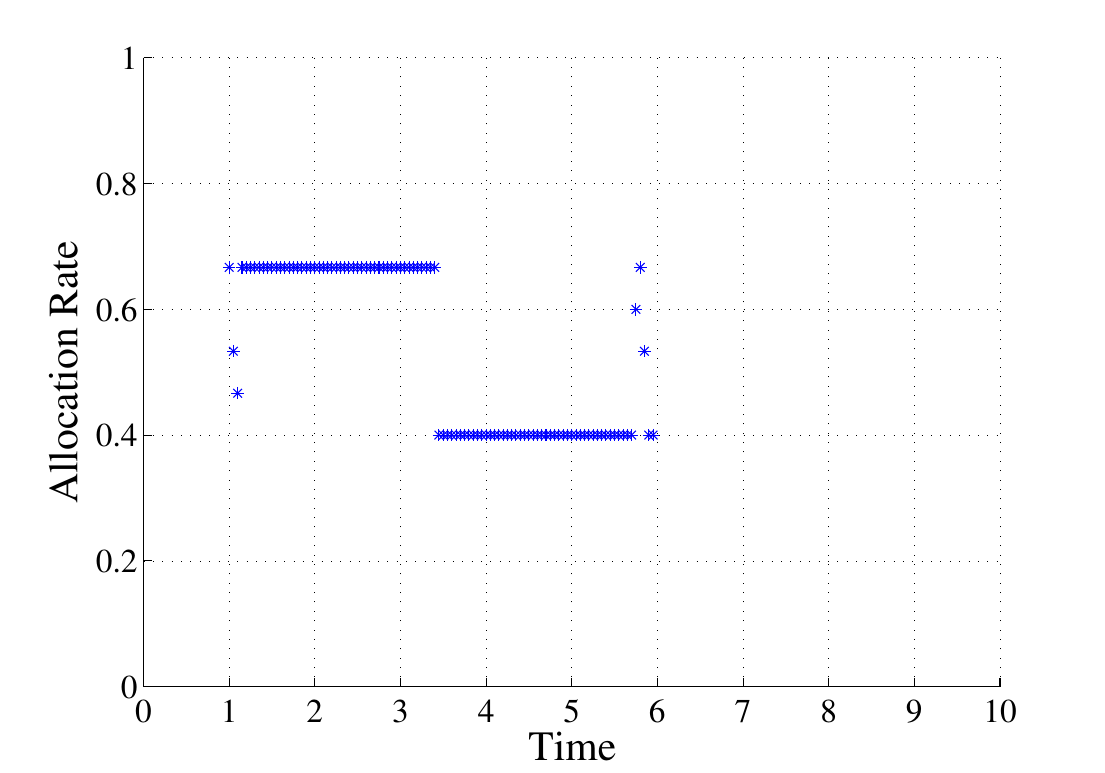}
\caption{\small With buffer size constraints: optimal allocation rate $A^{1,\,b}$}
\label{figfiniteA1}
\end{minipage}
\hspace{0.001cm}
\begin{minipage}[b]{.49\textwidth}
\centering
\includegraphics[width=1\textwidth]{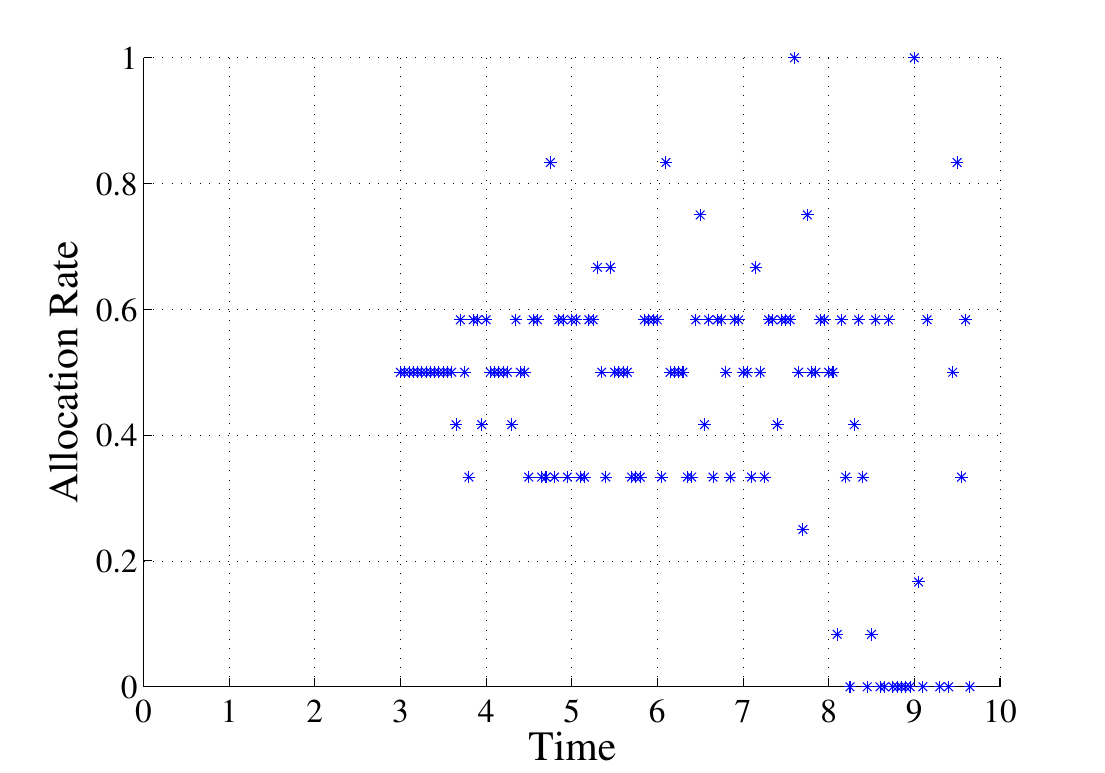}
\caption{\small With buffer size constraints: optimal allocation rate $A^{2,\,e}$}
\label{figfiniteA2}
\end{minipage}
\end{figure}
\begin{figure}[h!]
\centering
\includegraphics[width=0.75\textwidth]{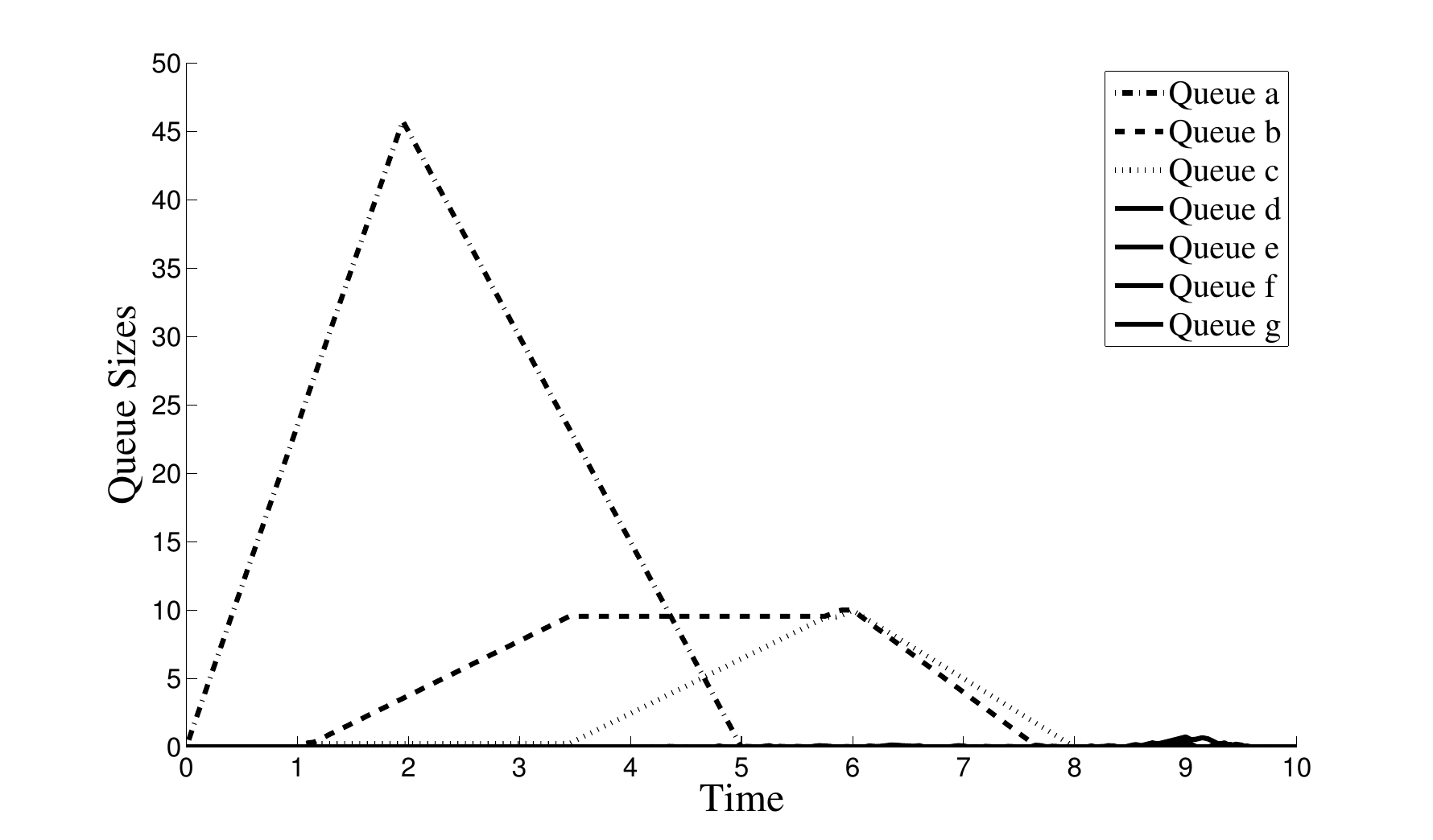}
\caption{With buffer size constraints:  queue upstream of each processor.}
\label{figfinitequeue}
\end{figure}

In the presence of the finite-buffer constraints,  the adjusted allocation rates are depicted in Figure \ref{figfiniteA1} and Figure \ref{figfiniteA2}.  Interestingly, the optimal value of $W^g(10)$ remains the same as the previous case, without buffer size constraints. This shows the non-uniqueness of the optimal solution of the MIP.

\subsection{Minimizing queuing}
In this example, we wish to minimize the queuing in the network, by controlling not only the allocation rates, but also the inflow profile $f^a(t)$ of the network. In view of our discussion in Section \ref{subsecinventory}, we consider the following revised objective function:
\begin{equation}\label{examplecost}
\max_{Q_i^a,\,A^{v,\,e}_i}\Big\{W^g(10)-\sum_{e\in\mathcal{A}}c_q^e\sum_{i=0}^N(Q_i^e-R_{i+\Delta^e}^e)\Big\}
\end{equation}
where the unit queuing cost $c_q^e$ is set to be equal to one for all $e\in\mathcal{A}$. By employing the above objective function, we wish to maximize the throughput of the network while keeping the queuing at a minimum level.  The resulting optimal inflow from the mixed integer program is shown in Figure \ref{figoptinflow}; the corresponding optimal allocation rates at vertices $1$ and $2$ are illustrated in Figure \ref{figA1fitting} and \ref{figA2fitting} respectively. Under such controls on the inflow and the allocation rates, the queues associated with all the processors remain empty throughout the time horizon.

The final throughput $W^g(10)$ is again 58.75.

\begin{figure}[h!]
\centering
\includegraphics[width=0.7\textwidth]{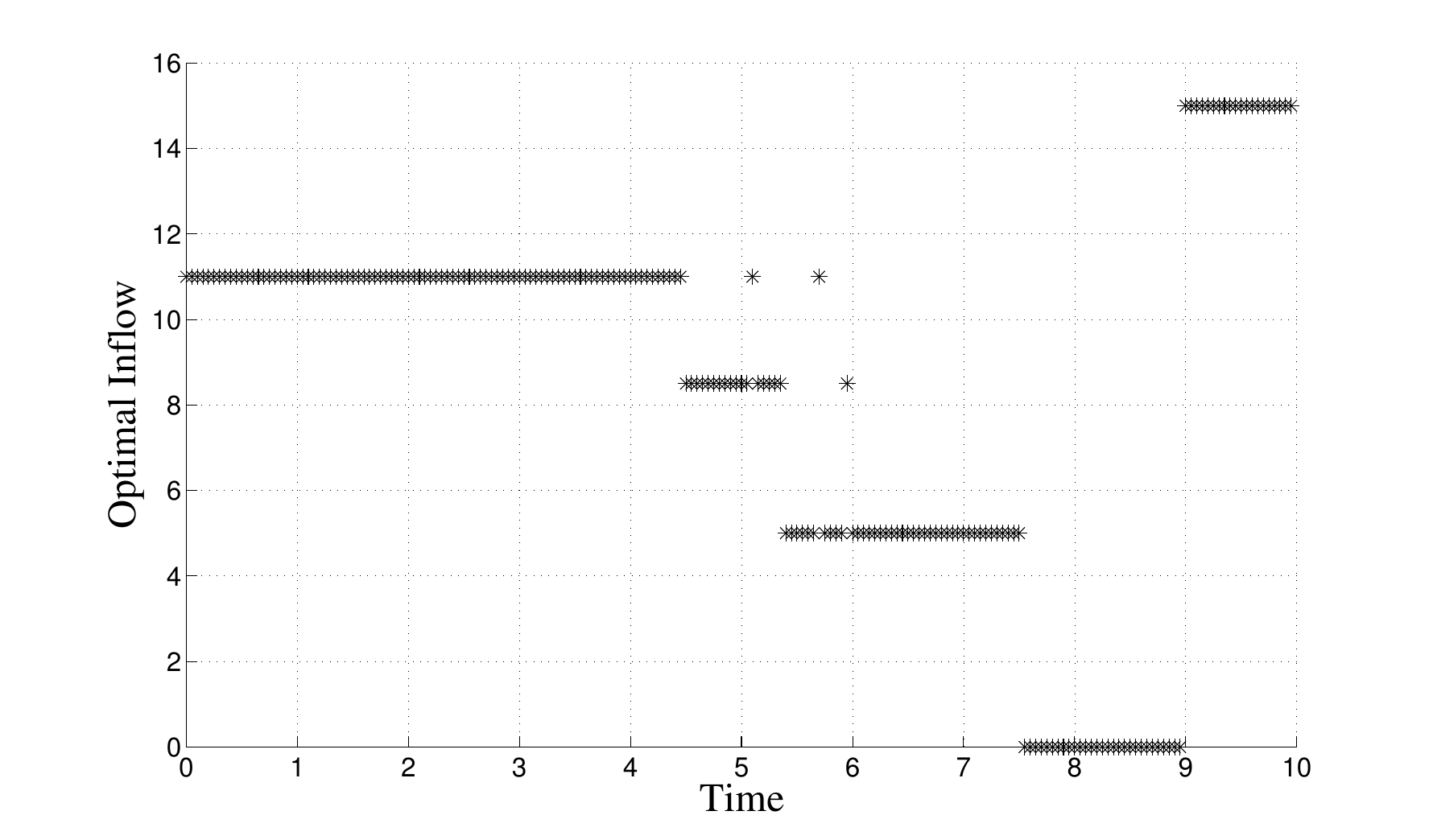}
\caption{Minimum queuing case: optimal inflow $f^a(t)$.}
\label{figoptinflow}
\end{figure}

\begin{figure}[h!]
\begin{minipage}[b]{.49\textwidth}
\centering
\includegraphics[width=1\textwidth]{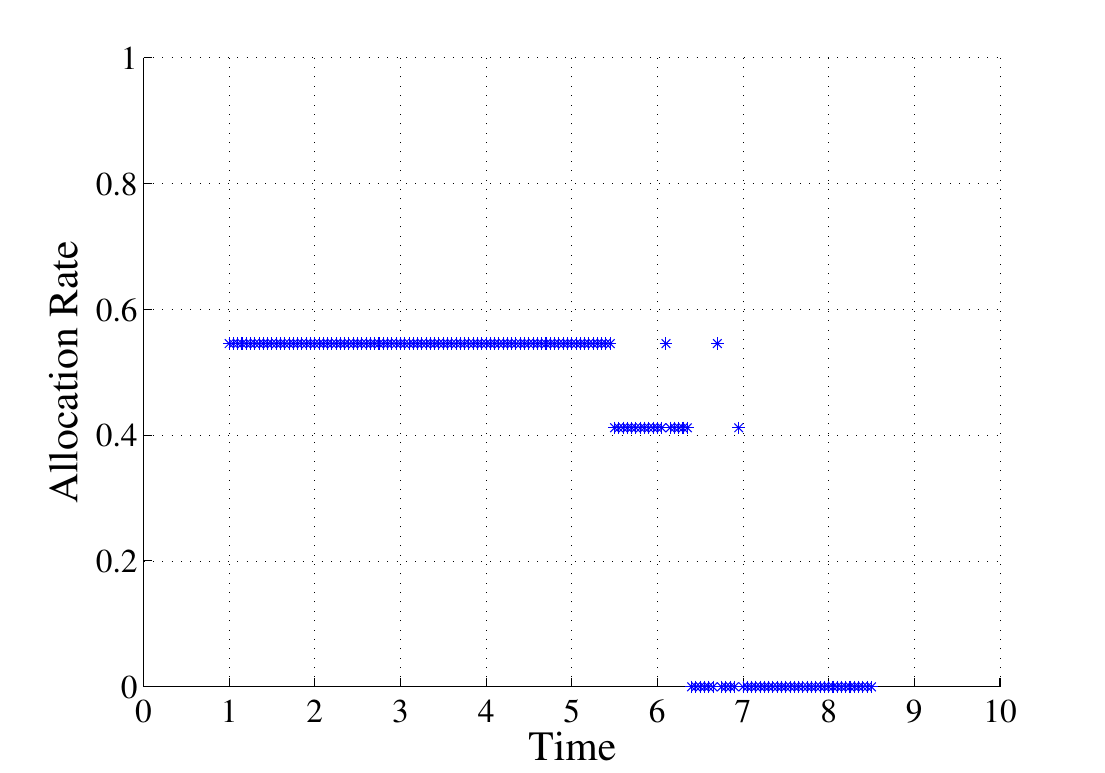}
\caption{\small Minimum queuing case:  allocation rate $A^{1,\,b}$}
\label{figA1fitting}
\end{minipage}
\hspace{0.001cm}
\begin{minipage}[b]{.49\textwidth}
\centering
\includegraphics[width=1\textwidth]{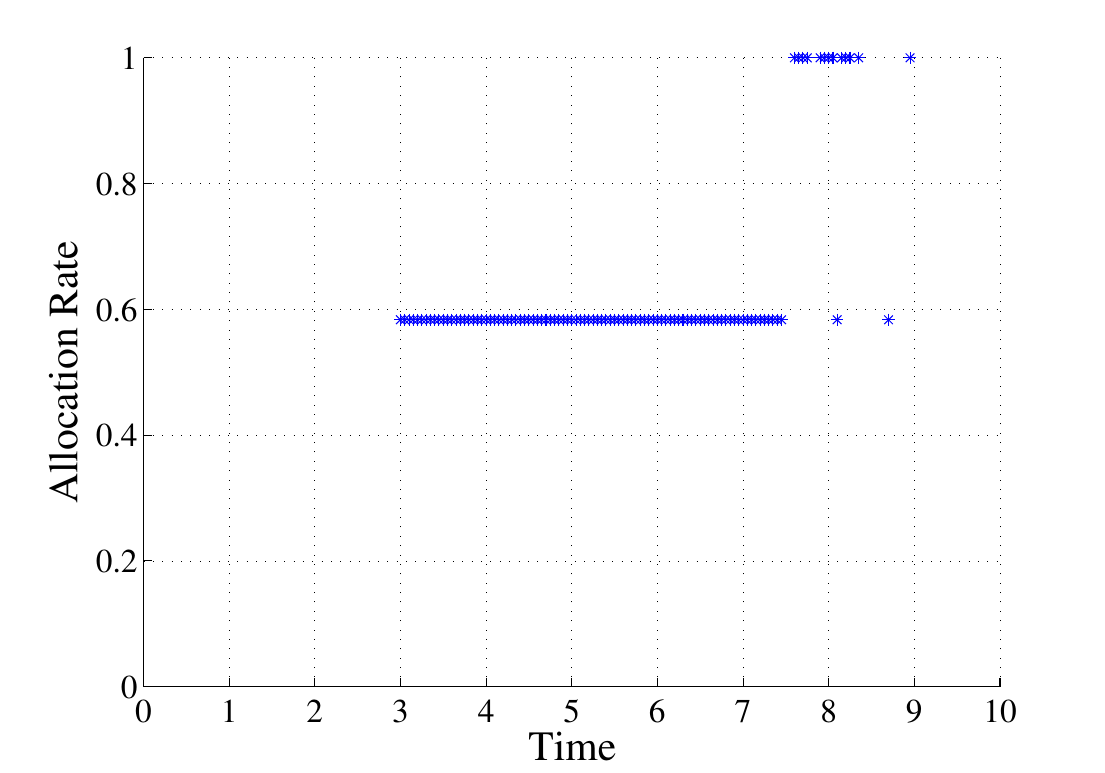}
\caption{\small Minimum queuing case: allocation rate $A^{2,\,e}$}
\label{figA2fitting}
\end{minipage}
\end{figure}


\subsection{Solution quality}\label{CSCsolutionprecision}

 We will analyze the result of our proposed MIP  in terms of numerical error and convergence, which will be compare with the finite difference schemes \cite{CSC, FGHKM, GHK1, GHK2}.   We use the same network and parameters as before, and assume infinite buffer capacities. The time horizon of interest is set to be $[0,\,80]$. The objective is to maximize the throughput  $W^g_N$. The inflow profile is
\begin{equation}\label{inflowQ}
 f^a(t)~=~\begin{cases} 45\qquad &0~\leq~t~<~10\\
0  \qquad & 10~\leq~ t~\leq~80\end{cases}
\end{equation}
Note that the integration of $f^a(t)$ on the time horizon is 450; it is natural to expect that the total throughput $W^g(80)$  is the same provided the time horizon is large enough. We run instances of the mixed integer program (\ref{thm1})-(\ref{thm6}) for $N=40,\,50,\,60,\,\ldots,\, 1000$ where $N$ is the number of time intervals. The optimal throughputs produced by these programs are plotted in Figure \ref{zigzag}.

\begin{figure}[h!]
\centering
\includegraphics[width=0.8\textwidth]{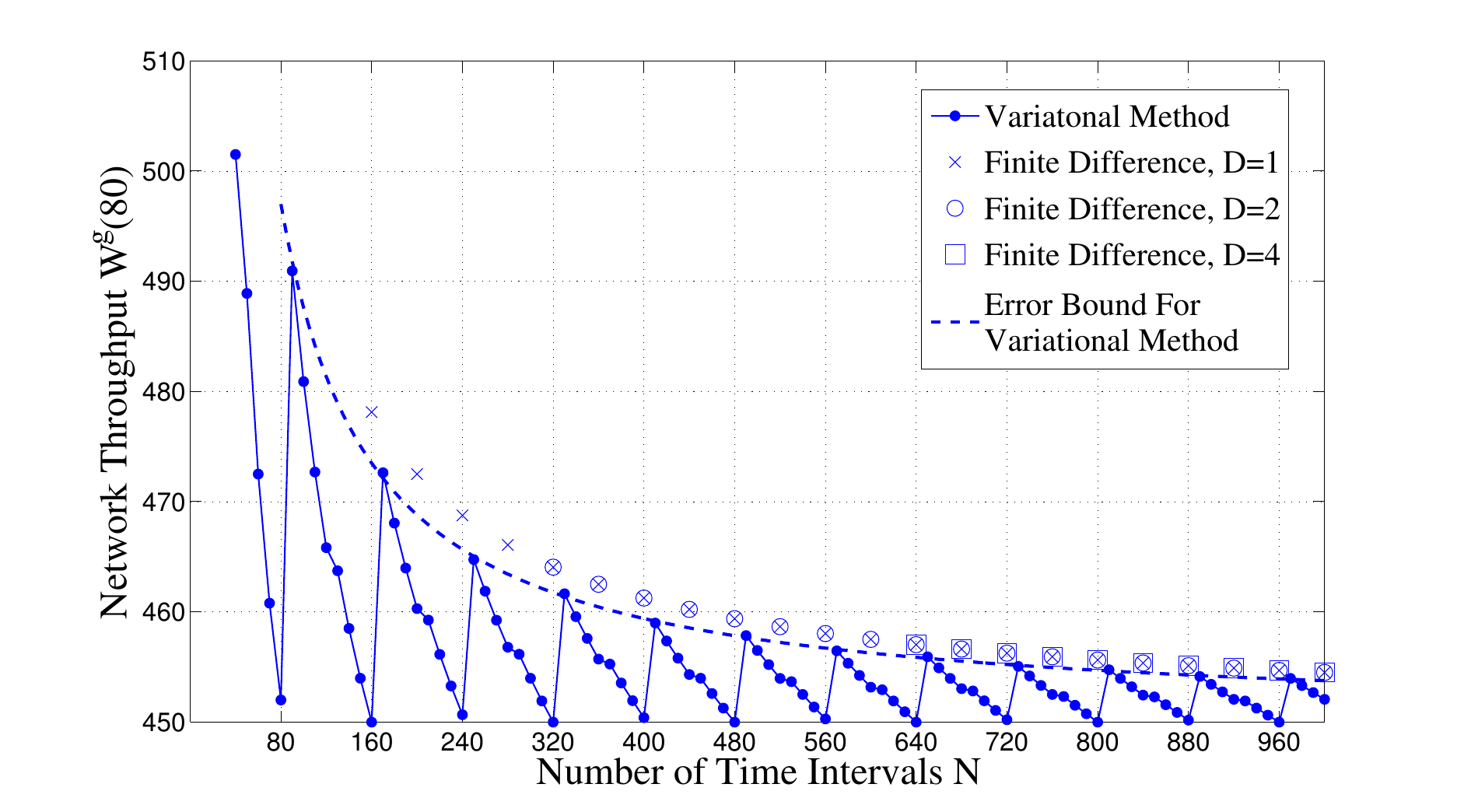}
\caption{Optimal output for different choices of time grid}
\label{zigzag}
\end{figure}

Before we explain the figure, let us recall from Proposition \ref{discrete1}  that the discrete-time Lax formula is exact provided that the throughput time $L^e/V^e,\,\,e\in\mathcal{A}$ is a multiple of the time step $h$, that is, when $N=160,\,320,\,480,\,640,\,800,\,960$ in our example. Otherwise, the error is given by estimate (\ref{error}):
\begin{equation}\label{error'}
0~\leq~W^e_{N}- W^e(80)~\leq~\Big(\lceil{L^e\over V^eh}\rceil-{L^e\over V^eh}\Big)\,h\mu^e\qquad e\in\mathcal{A}
\end{equation}
\eqref{error'} suggests that 1) the discrete-time numerical values overestimate the actual value of $W^g(80)$ which is 450; 2) the error displays an oscillatory pattern with damping, as $N$ increases. The oscillation is caused by the discontinuous function $\lceil{L\over Vh}\rceil$ while the damping is due to the factor $h$. Notice that the right hand side of error estimate (\ref{error'}) can be further relaxed to $h\mu$. It is not difficult to extend the error estimates for a single processor to a network, noticing that the error in the cumulative production curves adds up linearly through one processor (arc) to another:
\begin{equation}\label{heuristic}
0~\leq~W^g_N-W^g(80)~\leq~h\max_{p\in\mathcal{P}}\sum_{e\in p}\mu^e
\end{equation}
where $p=\{e_1,\,e_2,\ldots,\,e_m\}$ is any viable path of product flow, $e_i\in\mathcal{A}$, and $W^g(80)$ is the exact value of throughput. The right hand side of \eqref{heuristic} is shown in Figure \ref{zigzag} against different values of $N$.

 For comparison purposes, we also implement the MIP proposed by \cite{FGHKM}, which is based on a finite difference scheme. We experiment with different number of spatial intervals  $D$ for the discretization; for example, $D=1$ means the two-point discretization. The optimal throughputs are plotted in Figure \ref{zigzag} as well. Notice that when $D$ increases, the value of $N$ must also increase due to the CFL condition.  The numerical results summarized in Figure \ref{zigzag} leads to two observations: 1) the values of $D\in\{1,\,2,\,4\}$ seem to have no effect on the error as long as the CFL condition is satisfied; 2) in terms of solution precision, the variational approach outperforms the finite difference scheme even in the worst case scenario.

The above numerical experiment suggests that in order to maintain the same level of numerical error,  the size of our proposed MIP can be significantly smaller than the finite-difference approach. For example, to keep the error of $W^g_N$ under $1\%$, that is, $W^g_N\in[450,\,454.5]$, the finite-difference approach requires a time grid of approximately 1000 points and the same amount of binary variables, while with the variational approach, a time grid of 80 or 160 points is sufficient to achieve such solution precision. This observation is clearly supported by Figure \ref{zigzag}.

\subsection{A case study of a smoothed out version of \eqref{eqn2}-\eqref{feed}}\label{CSCcasestudy}
\cite{FGHKM} prosed a smoothed out version of the ODE (\ref{eqn2})-(\ref{feed}), in order to  avoid the numerical difficulties caused by the discontinuous right hand side. The modified ODE reads as follows.
\begin{equation}\label{smooth1}
{d\over dt}q^e(t)~=~\overline u^e(t)-f^e\big(\rho^e(t,\,a^e)\big)
\end{equation}
\begin{equation}\label{smooth2}
f^e\big(\rho^e(t,\,a^e)\big)~=~\min\Big\{\mu^e,\, {q^e(t)\over \varepsilon}\Big\}
\end{equation}
where $\varepsilon>0$ is a smoothing parameter. \eqref{smooth1}-\eqref{smooth2} is a practical representation of the original dynamic since it makes the right hand side of the ODE continuous, and the solution approximates the one of \eqref{eqn2}-\eqref{feed} well in most cases. To ensure numerical stability of the finite difference discretization, a stiffness condition (\cite{FGHKM}) requires that 
\begin{equation}\label{stiffcond}
\Delta t~\leq~ \varepsilon
\end{equation}
where $\Delta t$ denotes the time step size. 

In this case study, we compare the solutions of \eqref{smooth1}-\eqref{smooth2} and the variational formulation to illustrate that the smoothed out version of the original ODE could still yield ill-behaved solution in certain case.

Let us return to the example in Section \ref{nofinitebuffer} and set the network inflow to be
\begin{equation}\label{inflowQ'}
f^a(t)~=~\begin{cases} 14\qquad &0~\leq~t~<~10\\
0\qquad & 10~\leq~ t~\leq~80\end{cases}
\end{equation}
The MIP proposed by \cite{FGHKM} employs the smoothed \eqref{smooth1}-\eqref{smooth2} for the queue dynamics. Such MIP  is solved with $N=600,\,D=1$, where $N$ denotes the number of time steps, and $D$ denotes the number of spatial intervals for each processor. In other words, the two-point upwind discretization for the conservation law is used. The inflow, exit flow  and queue size of processor $a$ are shown together on the left part of Figure \ref{figcasestudy}. We notice that a discretization of (\ref{smooth1})-(\ref{smooth2}) yields a nonzero queue even though the inflow is strictly below the processor capacity $\mu^a=15$. It turns out that such non-physical queue is caused by the smoothing parameter $\varepsilon$ and can be reduced by choosing smaller $\varepsilon$, however, this once again implies a trade-off between numerical accuracy and computational burden, due to the stiffness condition \eqref{stiffcond}.

The  variational method, on the other hand, handles the same problem well with $N=600$.  The right part of Figure \ref{figcasestudy} shows that the queue stays zero and the exit flow is a a simple time shift of the inflow profile, which is consistent with the physics of the model.
\begin{figure}[htbp]
\begin{minipage}[b]{.49\textwidth}
\centering
\includegraphics[width=1\textwidth]{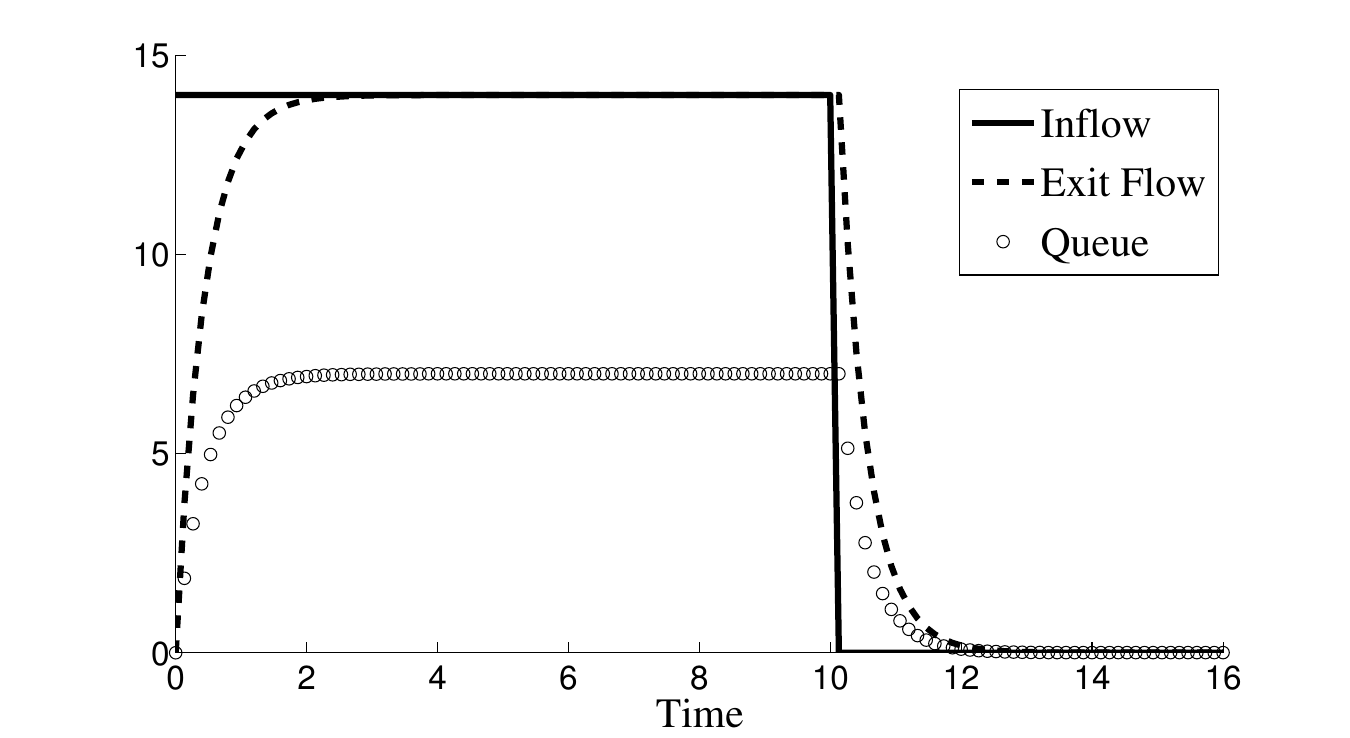}
\end{minipage}
\hspace{0.001cm}
\begin{minipage}[b]{.49\textwidth}
\centering
\includegraphics[width=1\textwidth]{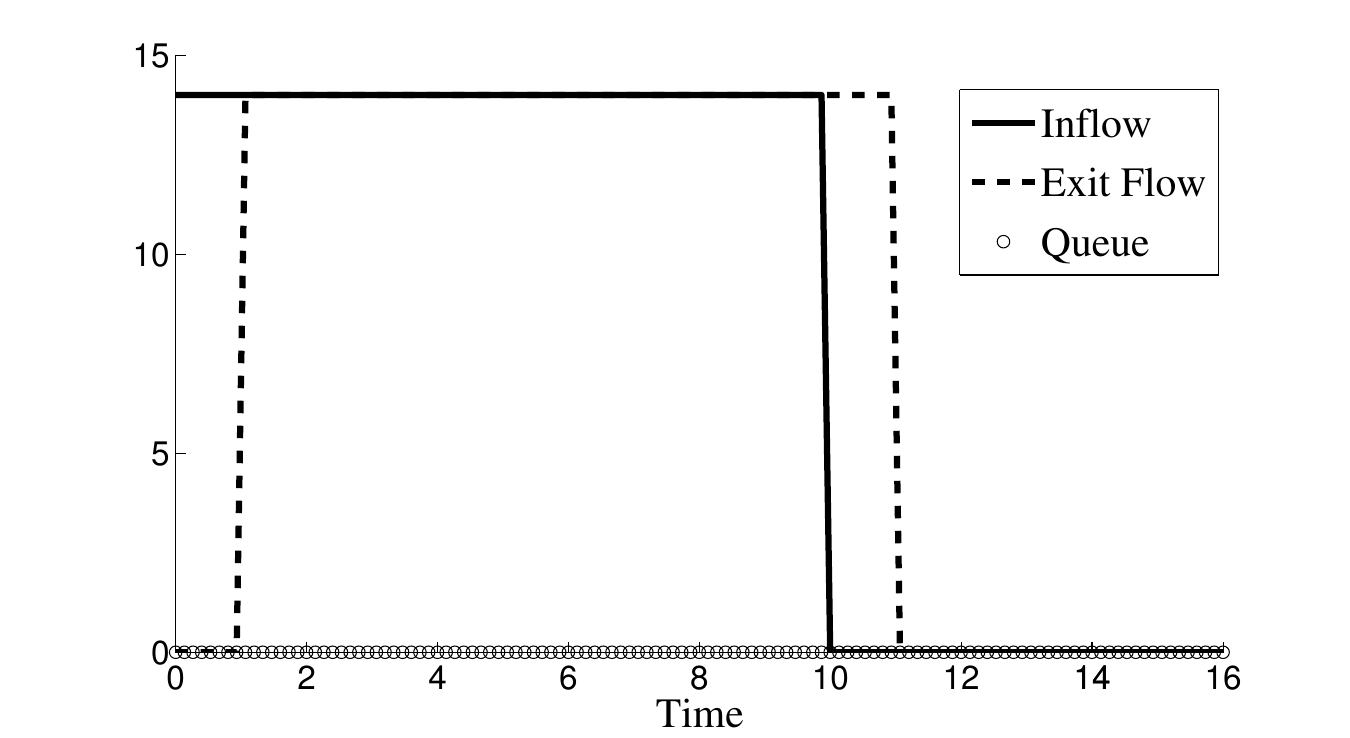}
\end{minipage}
\label{figcasestudy}
\caption{A comparison of  the smoothed out version of \eqref{eqn2}-\eqref{feed} and the variational approach. Left: solution on processor $a$ obtained by the finite-difference discretization of  \eqref{smooth1}-\eqref{smooth2} with $\varepsilon=0.5$. Right: solution on processor $a$ given by the variational approach.}
\end{figure}

\subsection{Solution time}\label{sectime}
As our final test, the computational times of both MIPs are recorded for the same network optimization problem as in Section \ref{nofinitebuffer} with the following network inflow: 
$$
f^a(t)~=~\begin{cases} 30 \qquad  0~\leq~t~\leq~5\\
0\,~\qquad 5~<~t~\leq~10\end{cases}
$$
For the MIP of \cite{FGHKM}, we employ a coarse two-point spatial discretization. For both MIP formulations, the same objective function is chosen to be 
\begin{equation}\label{obj2}
\max\sum_{i=0}^N {w^g_i\over 1+t_i}
\end{equation}
where $w^g_i$ is the exit flow on processor $g$ at time $t_i$.  Choosing such objective function ensures that the network throughput is maximized at every instance of time; in other words, the products are handled in a way such that they exit the network as early as possible.

The  two MIPs are solved with $N$, the number of time intervals, ranging from $160$ to $3000$. The corresponding solution times are summarized in Figure \ref{figcomruntime}.  Although the two MIPs are similar in size as we demonstrated in Section \ref{CSCcomparison}, the solution times of the proposed MIP problem is significantly lower than the other MIP. In addition, we observe a nearly linear growth of the computational time when $N$ increases for the variational approach.  Possible mechanisms for causing such significant difference in solution times for large scale problems,  involving either internal structure of the discretization or specific settings of the branch-and-bound algorithm, is currently under investigation.

\begin{figure}[h!]
\centering
\includegraphics[width=0.8\textwidth]{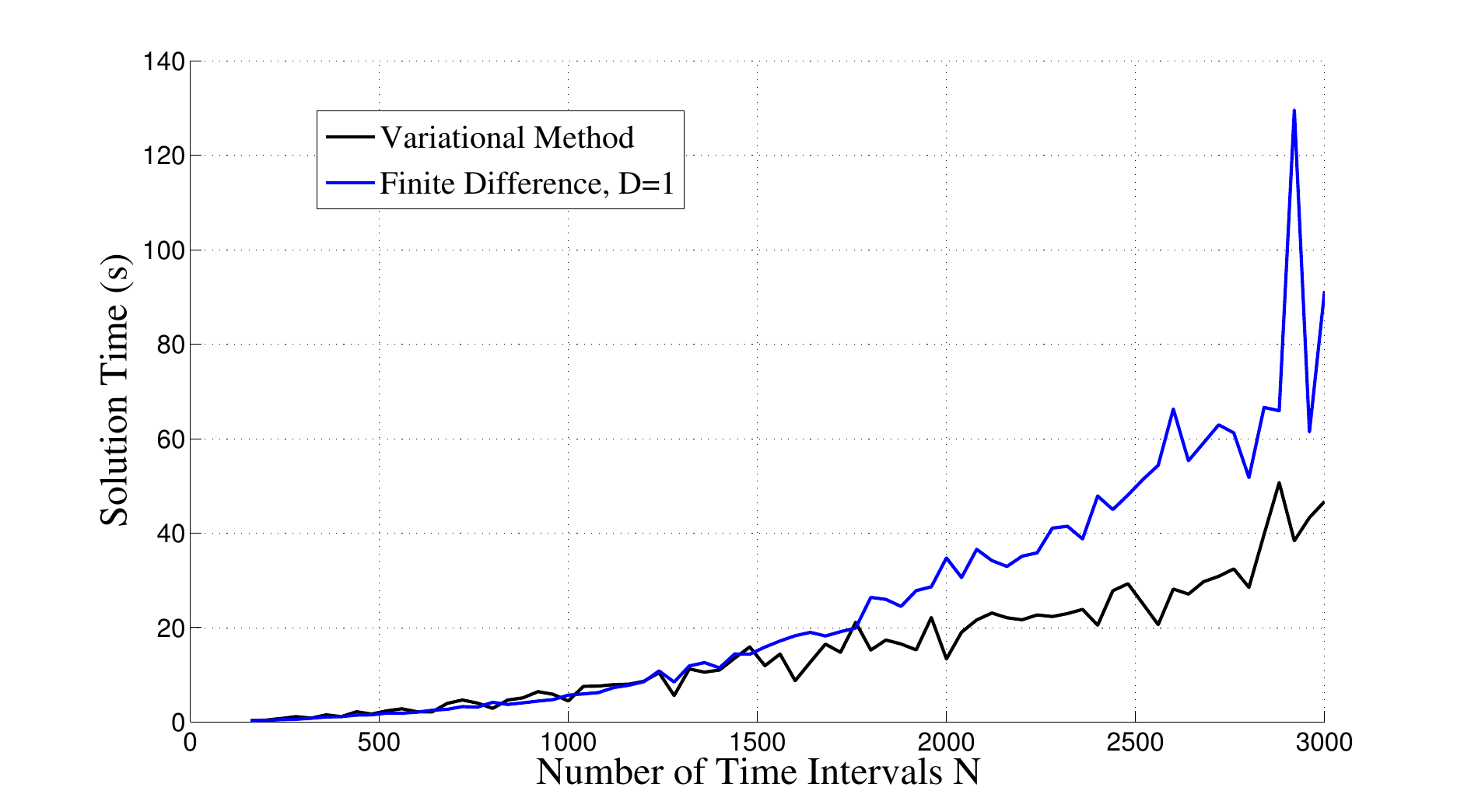}
\caption{Computational times of the two MIPs.}
\label{figcomruntime}
\end{figure}

\section{Conclusion}
This paper proposes a variational method for the modeling, computation, and optimization of a class of continuous supply chain networks. Such supply chain networks are investigated by \cite{GHK1} and can be formulated as a system of partial differential equations and ordinary differential equations. The main contribution made in this paper is an explicit solution representation of the dynamics on both the buffer queue and the processor. Our methodological framework is based on a Hamilton-Jacobi equation and a variational method known as the Lax formula. The closed-form solution is derived in both continuous and discrete time; and the latter leads to an algorithm with provable error estimates.  The proposed computational method is grid-free in the sense that it does not require a spatial discretization. Notably, the algorithm requires less computational effort and induces less numerical error than the finite difference method proposed for the coupling PDE and ODE \cite{FGHKM}. We also propose, based on the variational formulation, a mixed integer programming approach for the optimization of continuous supply chain networks. As we demonstrate in a series of numerical studies, the appropriate choice of the time grid could lead to a significantly reduced and even zero error, when compared with the MIP of \cite{FGHKM}. We also show that the proposed MIP requires much less computational effort than the one based on the PDE-ODE system, in order to properly represent the dynamics.

It is worthy mentioning that the continuous supply chain model, expressed by an ODE for the buffer queue and a PDE for the processor, is in many ways similar to the famous Vickrey  model \cite{Vickrey} for dynamic traffic flows. Applications of the variational approach in the venue of traffic modeling are presented in \cite{GVM1} and \cite{GVM2}.

Future extensions of the variational formulation will be focused on the modeling of multi-commodity supply chain network with more realistic features. To do so, it is desirable to consider inhomogeneous Hamilton-Jacobi equations to account for non-constant processing time. More sophisticated junction models need to be introduced as well to treat product flows with given origins and destinations.

\end{document}